\begin{document}
\linespread{1.2}
\markboth{Y.G.~PEI, Q.H.~GAO, J.Y. Wang, S.F.~Song AND D.T.~ZHU}{A Line Search Filter Sequential Adaptive Cubic Regularisation Algorithm}

\title{A LINE SEARCH FILTER SEQUENTIAL ADAPTIVE CUBIC REGULARISATION ALGORITHM FOR NONLINEARLY CONSTRAINED OPTIMIZATION}

\author{
Yonggang Pei
\thanks{School of Mathematics and Statistics, Henan Normal university, Xinxiang 453007, China \\ Email: peiyg@163.com}
\and
Jingyi Wang
\thanks{School of Mathematics and Statistics, Henan Normal university, Xinxiang 453007, China \\ Email: 15993075691@163.com}
\and
Shaofang Song
\thanks{School of Mathematics and Statistics, Southwest University, Chongqing 400715, China \\ Email: ssf731713505@163.com}
\and
Qinghui Gao
\thanks{Faculty of Education, School of Mathematics and Statistics, Henan Normal university, Xinxiang 453007, China \\ Email: gaoqinghui@htu.edu.cn}
\and
Detong Zhu
\thanks{Mathematics and Science College, Shanghai Normal University, Shanghai 200234, China \\ Email: dtzhu@shnu.edu.cn}
}

\maketitle

\begin{abstract}
In this paper, a sequential adaptive regularization algorithm using cubics (ARC) is presented to solve nonlinear equality constrained optimization.  It is motivated by the idea of handling constraints in sequential quadratic programming methods. In each iteration, we decompose the new step into the sum of the normal step and the tangential step by using composite step approaches. Using a projective matrix, we transform the constrained ARC subproblem into a standard ARC subproblem which generates the tangential step. After the new step is computed, we employ line search filter techniques to generate the next iteration point. Line search filter techniques enable the algorithm to avoid the difficulty of choosing an appropriate penalty parameter in merit functions and the possibility of solving ARC subproblem many times in one iteration in ARC framework. Global convergence is analyzed under some mild assumptions. Preliminary numerical results and comparison are reported.
\end{abstract}

\begin{classification}
49M37, 65K05, 65K10, 90C30.
\end{classification}

\begin{keywords}
Nonlinear optimization, Cubic regularization, Global convergence, Line search filter, Sequential quadratic programming.
\end{keywords}

\section{Introduction}
\label{sec:into}
Optimization algorithms play an important role in many fields, such as artificial intelligence, machine learning, signal processing, modeling design, transportation analysis, industry, structural engineering, economics, etc \cite{Curtis2020IEEE,Conn}.
Sequential quadratic programming (SQP) methods, which generate steps by solving quadratic subproblems, are among the most effective methods for nonlinearly constrained optimization. SQP methods have shown their strength when solving problems with significant nonlinearities in the constraints \cite{1999Nonlinear,Nocedal2006,sunyuan2006book,NeculaiAndrei2022} since it was first proposed by Wilson in \cite{wilson1963simplicial}.

SQP methods are often embedded in two fundamental strategies, line search and trust-region, to solve nonlinearly constrained optimization. Recently, a third alternative strategy, the adaptive regularization method using cubics (ARC), is presented. ARC was proposed by Cartis et~al. \cite{Cartis2011A} for solving unconstrained optimization. It can be viewed as an adaptive version of the cubic regularization of the Newton's method which was proposed by Griewank \cite{griewank1981} and its global convergence rates were first established by Nesterov and Polyak \cite{Nesterov2006}. Benson and Shanno \cite{BensonShanno2014COAP} also described the development of cubic regularization methods. Recently, Bellavia, Gurioli, Morini, and Toint proposed an adaptive regularization method for nonconvex optimization using inexact function values and randomly perturbed derivatives \cite{JC2022inexact}. ARC has shown its attractive convergence properties and promising numerical experiments performance \cite{Cartis2011B} for solving unconstrained optimization. It can also be viewed as a non-standard trust region method while it offers an easy way to avoid difficulties resulting from the incompatibility of the intersection of linearized constraints with trust-region bounds in constrained optimization.

In this paper, we consider how to extend ARC to solve nonlinearly constrained optimization by referring to the idea of SQP methods, and propose a penalty-free sequential adaptive cubic regularization algorithm. In each iteration, two problems should be addressed. One is the computation of a new step. The other is the decision of a new iteration point by using the new step. To obtain a new step, we need to deal with an ARC subproblem with linearized constraints. Composite step approaches are utilized to compute the new step which is decomposed into the sum of the normal step and the tangential step \cite{Conn}. The normal step is computed firstly and aims to reduce the constraint violation degree while it satisfies the linearized constraints. The tangential step is used to present sufficient decrease of the model. Using a projective matrix, we can transform the constrained ARC subproblem into a standard ARC subproblem which generates the tangential step. This projective matrix allows us to avoid the computational difficulties caused by poor choice of basis of null space of the Jacobian of the constraints in reduced Hessian methods.

After the new step is computed, we employ line search filter techniques in \cite{2005Line} to generate the next iteration point. Line search filter techniques can help us avoid the difficulty of choosing an appropriate penalty parameter in merit functions and the possibility of solving ARC subproblem many times in one iteration in ARC framework. So, the proposed algorithm is also called as line search filter sequential adaptive regularization algorithm using cubics (LsFSARC). The adaptive parameter in ARC is adjusted by the ratio of the reduction of the objective function to the reduction of the model. This is different from the standard ARC methods, where the reduction ratio is used to both update the adaptive parameter and decide the acceptance of the trial step. Global convergence is analyzed under some mild assumptions.

The following part of this paper is developed as follows. In Section \ref{sec2}, the computation of search direction is described and the filter sequential ARC algorithm combining line search for equality constrained optimization is developed as shown in Algorithm 2.1. The analysis of the global convergence to first-order critical point is presented in Section \ref{sec3}. Preliminary numerical results comparison are presented in Section \ref{sec4} and conclusion is reported in Section \ref{sec5}.

\emph{\bf Notation:}
Throughout this paper, we denote the transpose of a vector $v$ by $v^T$ and denote the transpose of  a matrix $A$ by  $A^T$.  Norms $\|\cdot\|$ denote the Euclidean norm and its compatible matrix norm. $ |\mathcal{A}|$ denotes the number of elements in a set $ \mathcal{A}$. Finally, we denote by $O(t_k)$ a sequence $\{v_k\}$ satisfying $\|v_k\|\leq \beta t_k$ for a constant $\beta>0$ independent of $k$.
%
%
%
%
%
%


\section{Line search filter sequential ARC algorithm}\label{sec2}
\label{sec:manu}
In this section, we focus on constructing an ARC algorithm combining line search filter technique to solve the nonlinear equality constrained optimization problem
\begin{subequations}\label{aaa}
	\begin{eqnarray}
		{\underset {x\in\mathbb{R}^n}{\mbox{minimize}}}   &\ \ \  f(x) \ \\
		\mbox{subject to} &\ \ \  c(x)= 0,
	\end{eqnarray}
\end{subequations}
where the objective function $f: \mathbb{R}^n \rightarrow \mathbb{R}$ and the equality constraints $c: \mathbb{R}^n \rightarrow \mathbb{R}^m$ are sufficiently smooth functions with $m\leq n$.
First, the process of computing the trial step is presented. Then, we give the acceptance mechanism for the trial step. The whole algorithm is reported in the end of this section.

Consider the constrained optimization problem \eqref{aaa}. Let $A(x)$  denote the Jacobian matrix of $c(x)$, namely,
	\begin{equation*}
		A(x)^T=[\nabla c_1(x), \nabla c_2(x), \ldots, \nabla c_m(x)].
	\end{equation*}
Assume that $A(x)$ has full row rank. We can define a projective matrix
	\begin{eqnarray}\label{eq2.1}
		P(x):=I-A(x)^T(A(x)A(x)^T)^{-1}A(x),
	\end{eqnarray}
which is a projection onto the null space of $A(x)$. This projective matrix allows us to avoid the computational difficulties caused by poor choice of basis of null space of the Jacobian of the constraints in reduced Hessian methods.

Meanwhile, we use the same definition of Lagrange function in \cite{P2018}
	\begin{eqnarray}\label{eq2.2}
		\ell(x):=f(x)-\lambda(x)^Tc(x),
	\end{eqnarray}
where $\lambda(x)$ is a projective version of the multiplier vector
	\begin{eqnarray}\label{eq2.3}
		\lambda(x):=(A(x)A(x)^T)^{-1}A(x)g(x)\in\mathbb{R}^m
	\end{eqnarray}
with $g(x):= \nabla f(x)$ denoting the gradient of the objective function $ f(x)$.

Hence, the KKT conditions can be expressed as
\begin{eqnarray*}
g(x)+A(x)y=0,~~~c(x)=0
\end{eqnarray*}
for some $y\in \mathbb{R}^m$. Equivalently, the KKT conditions can be written as
\begin{eqnarray}\label{KKT}
P(x)g(x)=0,~~~c(x)=0.
\end{eqnarray}

To obtain search directions, we need to deal with the following subproblem which is similar to SQP methods in iteration $k$,
	\begin{subequations}\label{model}
		\begin{align}
			{\underset {d\in\mathbb{R}^n}{\mbox{minimize}}} &\ \ \   f_k+g_k^Td+\frac{1}{2}d^TH_kd+\frac{1}{3}\sigma_k\|d\|^3 \\
			\mbox{subject to} &\ \ \  A_kd+c(x_k)=0,
		\end{align}
	\end{subequations}
where $f_k:=f(x_k)$, $g_k:=\nabla f(x_k)$, $A_k:=A(x_k)$, $c(x_k):=c(x_k)$, $H_k$ denotes the Hessian of Lagrange function  $\nabla_{xx}\ell(x_k)$  or its approximation and  $\sigma_k\in\mathbb{R}^+$ is an adaptive parameter in ARC. In addition, we assume that $A_k$ has full row rank for all $k$.

Instead of solving subproblem \eqref{model} directly, we decompose the overall step via composite methods as follows.
	\begin{equation*}
		d_k=n_k+t_k,
	\end{equation*}
where $n_k$ is called as a normal step which is used to satisfy feasibility condition, and $t_k$ is called as a tangential step for ensuring sufficient decrease of the function's model.

First, we can compute $n_k$ by
	\begin{eqnarray}\label{qjn}
		n_k=-A_k^T(A_kA_k^T)^{-1}c(x_k).
	\end{eqnarray}
Moreover, to ensure sufficient reduction in model function, we also require that the following condition
	\begin{eqnarray}\label{nkjie}
		\|n_k\|\leq\beta_1{\rm min}\bigg\{1,\frac{\beta_2}{\sqrt{\sigma_k}^{\beta_3}}\bigg\}\frac{1}{\sqrt{\sigma_k}},
	\end{eqnarray}
where fixed constants
$\beta_1,\beta_2>0$ and $\beta_3\in(0,1)$.

However, \eqref{nkjie} may not hold. So we distinguish two cases depending on that whether \eqref{nkjie} holds.
First, we consider the case \eqref{nkjie} holds.
The tangential step $t_k$ is computed as
	\begin{eqnarray}\label{tn}
		t_k=P_ku_k,
	\end{eqnarray}
where $P_k:=P(x_k)$ and $u_k$ is the solution(or its approximation) of the following problem
	\begin{equation}\label{qjt}
		{\mbox{minimize}}\ \ \  f(x_k)+(P_kg_k)^Tu+\frac{1}{2}u^T(P_kH_kP_k)u+\frac{1}{3}\sigma_k\|P_ku\|^3.
	\end{equation}
	The above problem is constructed by using reduced Hessian methods.
	
	After computing $u_k$, from \eqref{tn} and the definition of $P_k$, we can define
		\begin{eqnarray}\label{99}
			m_k^{t}(t_k):=f(x_k)+g_k^Tt_k+\frac{1}{2}t_k^TH_kt_k+\frac{1}{3}\sigma_k\|t_k\|^3.
		\end{eqnarray}
	Next, we discuss the mechanism of the acceptance of the trial point $x_k(\alpha_{k,l})$.
	
	For the purpose of obtaining the next iteration $x_{k+1}$, we need to determine a step size $\alpha_k$ so that $x_{k+1}=x_k+\alpha_kd_k$. To this end, we use a backtracking line search procedure combining filter method where a decreasing sequence of step sizes $\alpha_{k,l}\in(0,1](l=0,1,2,...)$ with ${\rm lim}_{l\to\infty}\alpha_{k,l}=0$ is tried until some acceptance rules are satisfied and the trial point is accepted by the current filter. For brevity, the trial point $x_k(\alpha_{k,l})$ is denoted by
		\begin{eqnarray*}
			x_k(\alpha_{k,l}):=x_k+\alpha_{k,l}d_k.
		\end{eqnarray*}
	These acceptance rules are reported in detail as follows.
	

Define the constraint violation measure $$ h(x):=\|c(x)\|.$$ We use the same definition of a filter in \cite{2005Line} where the filter is defined as a set $\mathcal{F}_k\subseteq [0,\infty)\times \mathbb{R}$ containing all prohibited $(h(x_j),\ell(x_j))$ pairs in iteration $k$.
%
At the beginning, we can set $\mathcal{F}_0=\{(h,\ell)\in\mathbb{R}^2:h>h(x_0)\}$.

A trial point $x_k(\alpha_{k,l})$ can be accepted only if it provides satisfying improvement of the infeasibility measure $h(x)$
	or $\ell(x)$, i.e.,
		\begin{eqnarray}\label{2.17}
			h(x_k(\alpha_{k,l}))\leq (1-\gamma_h)h(x_j)~~{\rm or~~}\ell(x_k(\alpha_{k,l}))\leq \ell(x_j)-\gamma_\ell h(x_j)
		\end{eqnarray}
	holds for all $(h(x_j),\ell(x_j))$ in $\mathcal{F}_k$ with fixed constants $\gamma_h,\gamma_\ell\in(0,1)$.

A trial point $x_k(\alpha_{k,l})$ is called to be acceptable to the filter $\mathcal{F}_k$  if
$$(h(x_k(\alpha_{k,l})),\ell(x_k(\alpha_{k,l})))\notin\mathcal{F}_k.$$
	
This criterion indicates that, provided that $\{\ell(x_k)\}$ is monotonically decreasing and bounded below, a sequence $\{x_k\}$ is forced towards feasibility. However, this type of sequence $\{x_k\}$ could still be accepted even if it converges to a nonoptimal point. In order to prevent this from happening, define a model
		\begin{eqnarray}\label{eq2.12}
			m_{k}(\alpha):=\alpha g_k^Tt_k+\frac{1}{2}\alpha^2t_k^TH_kt_k+\frac{1}{3}\alpha^3\sigma_k\|t_k\|^3-\alpha(\nabla\lambda_k^Td_k)^Tc(x_k),
		\end{eqnarray}
where $\nabla\lambda_k:=\nabla\lambda(x_k)$ denotes the gradient of $\lambda(x)$ at $x_k$. The following condition
		\begin{eqnarray}\label{lang}
			\ell(x_k(\alpha_{k,l}))\leq \ell(x_k)+\mu m_k(\alpha_{k,l})
		\end{eqnarray}
 is employed to be the acceptance criterion whenever the following switching conditions
		\begin{eqnarray}\label{999}
			m_k(\alpha_{k,l})<0~~~{\rm and}~~~(-m_k(\alpha_{k,l}))^{\omega}(\alpha_{k,l}\sqrt{\sigma_k})^{\omega-1}>\kappa_h(h(x_k))^{\varsigma}
		\end{eqnarray}
	hold for the current trial step size $\alpha_{k,l}$, where $0<\mu<1$, $\kappa_h>0,~\omega \geq 1$ and $\varsigma>2$ are fixed constants.

In order to tackle the situation where no acceptable step can be found and the feasibility restoration procedure has to be started, we set a threshold
\begin{eqnarray}\label{alphamin}
			\alpha_{k}^{\min}:=
			\begin{cases}
				\mu_{\alpha}\min\left\{\gamma_h,\frac{\gamma_lh(x_k)}{-g_k^Tt_k+(\nabla\lambda_k^Td_k)^Tc(x_k)},\frac{\kappa_h[h(x_k)]^{\phi}
					\sigma_{k_j}^{1-\tau}}{[-g_k^Tt_k+(\nabla\lambda_k^Td_k)^Tc(x_k)]^{\tau}} \right\} &~{\rm if}~  \delta_k>0,\\
				\mu_{\alpha}\gamma_h &~{\rm otherwise},\\				
			\end{cases}
\end{eqnarray}
with $\delta_k:=-g_k^Tt_k+(\nabla\lambda_k^Td_k)^Tc(x_k)$ and  fixed constants $\mu_{\alpha}\in (0,1]$, $\phi>2,\tau\geq1$.
The algorithm goes to feasibility restoration procedure if $\alpha_{k,l}<\alpha_{k}^{\min}$.	

Next we consider the case \eqref{nkjie} does not hold. In this case, we use the same strategy as in \cite{2005Line}. That is, the algorithm relies on the feasibility restoration procedure, whose purpose is to generate a new iterate $x_{k+1}=x_k+r_k$ which is acceptable for $\mathcal{F}_k$ and satisfies \eqref{nkjie}, where $r_k$ is a solution of the following problem
		\begin{eqnarray}\label{2.16}
			\underset{x\in\mathbb{R}^{n}}{\rm min}~h^2(x)
		\end{eqnarray}
 from $x_k$.
For convenience, we denote the set
$\mathcal{A}:=\{k~\vert ~x_k~{\rm is~added~to~the~filter}\}.$
One can see that $\mathcal{F}_k\subsetneqq\mathcal{F}_{k+1}\iff k\in\mathcal{A}$. Let $\mathcal{A}_{\rm inc}$ be the set of all indices of those iterations in which the feasibility of restoration procedure is invoked when \eqref{nkjie} does not hold.

Now, we are ready to state the linear search filter sequential adaptive regularisation algorithm with cubics for solving problem \eqref{aaa} as shown in Algorithm 2.1.	
\begin{algorithm}\label{algorithm1}
		\caption{LsFSARC for nonlinear equality constrained optimization.}
		\begin{algorithmic}
			\item [Step 0.] Initialization.\\
			(i)~Given starting point $x_0$, an initial $\sigma_{0}>0$ such that $\sigma_{\rm min}\leq\sigma_0$, an initial symmetric matrix $H_0$.\\
			(ii)~Set constants $1<\gamma_1\leq\gamma_2$, $0<\eta_1<\eta_2<1$, $\beta_1\in(0,1]$, $\beta_2>0$, $\beta_3$, $\beta$, $\kappa_h$, $\gamma_h$, $\gamma_\ell\in(0,1)$, $\varsigma>2$, $\omega\geq1$, $0<\omega_1\leq\omega_2<1$.\\
			(iii)~Set the filter $\mathcal{F}_0=\{(h,\ell)\in\mathbb{R}^2:h\geq h_{\rm max}>h(x_0)\}$
			and the iteration counter $k=0$.
			\item [Step 1.] Compute $f_k,g_k,h(x_k),A_k,\lambda_k:=\lambda(x_k),H_k,P_k$. \
			\item [Step 2.] Stop if $x_k$ is a stationary point of optimization problem \eqref{aaa}, i.e., if it satisfies the KKT conditions \eqref{KKT}.
			\item [Step 3.] Compute $n_k$ by \eqref{qjn}. If \eqref{nkjie} holds, compute $t_k$ by \eqref{qjt} and set $d_k=n_k+t_k$. Otherwise, go to step 13.
			\item [Step 4.] Compute $\alpha_k^{\rm min}$ with $\alpha_k^{\rm min}$ defined by \eqref{alphamin}. Set $\alpha_{k,0}=1$ and $l=0$.
			\item [Step 5.] If $\alpha_{k,l}<\alpha_k^{\rm min}$, go to step 13. Otherwise, compute the new trial point $x_k(\alpha_{k,l})=x_k+\alpha_{k,l}d_k$.
			\item [Step 6.] If $(h(x_k(\alpha_{k,l})),\ell(x_k(\alpha_{k,l})))\in \mathcal{F}_k$, reject $\alpha_{k,l}$ and go to step 10.
			\item[Step 7.] If \eqref{999} holds, go to step 8. Otherwise go to step 9.
			\item [Step 8.] If \eqref{lang} holds, set $\alpha_{k}=\alpha_{k,l}$, $x_{k+1}=x_k(\alpha_k)=x_k+\alpha_kd_k$ and go to step 11. Otherwise, go to step 10.
			\item [Step 9.] If \eqref{2.17} holds, set $\alpha_k=\alpha_{k,l}$, $x_{k+1}=x_k(\alpha_k)=x_k+\alpha_kd_k$, add $x_k$ to the filter $\mathcal{F}_k$  and go to step 11. Otherwise, go to step 10.
			\item [Step 10.] Choose $\alpha_{k,l+1}\in[\omega_1\alpha_{k,l},\omega_{2}\alpha_{k,l}]$, set $l=l+1$ and go back to step 5.
			\item [Step 11.] If $m_k(\alpha_k)<0$, compute
			\begin{eqnarray}\label{rhok}
				\rho_k:=\frac{\ell(x_k+\alpha_kd_k)-\ell(x_k)}{m_k(\alpha_k)}
			\end{eqnarray}
			and set
			\begin{equation*}
				\sigma_{k+1}\in
				\begin{cases}
					(\gamma_1\sigma_k,\gamma_2\sigma_k] & \text{if $\rho_k<\eta_1$},\\
					(\sigma_k,\gamma_1\sigma_k] & \text{if $\rho_k\in[\eta_1,~\eta_2)$},\\
					(0,\sigma_k] & \text{if $\rho_k\geq\eta_2$}.\\
				\end{cases}
			\end{equation*}
			Otherwise, set $\sigma_{k+1}\in (\gamma_1\sigma_k,\gamma_2\sigma_k]$.
			\item [Step 12.] Set $k=k+1$ and go to step 1.
			\item [Step 13.] Feasibility restoration procedure.\\
			13.1~~Compute a new iterate point $x_{k+1}$ by decreasing $h(x)$ for which $x_{k+1}$ satisfies both \eqref{2.17} and $(h(x_{k+1}),\ell(x_{k+1}))\notin\mathcal{F}_k$.\\
			13.2~~Determine $\sigma_{k+1}$ and go to step 12.\\
		\end{algorithmic}
	\end{algorithm}
\section{Global convergence}\label{sec3}
$Assumptions$ G. Let $\{x_k\}$ be the sequence produced by Algorithm  2.1,  where restoration iteration terminates successfully and the algorithm does not stop at a KKT point.\\
(G1)~~The iterations $\{x_k\}\subset X$, where $X$ is a closed, bounded domain with $X\subset\mathbb{R}^n$.\\
(G2)~~$f(x)$ and $c(x)$ are differentiable on $X$, and $\nabla f(x)$ and $\nabla c(x)$ are Lipschitz-continuous over $X$.\\
(G3)~~There exists a constant $M_{H}>0$ so that $\|H_k\| \leq M_{H}$ for all $k$.\\
(G4)~~$H_k$ is semipositive definite on the null space of the Jocabian $A_k$ for each $k$.\\
(G5)~~There exist constants $\delta_h,\kappa_n>0$ such that if $h(x_k)\leq\delta_h$,
	\begin{eqnarray}\label{3.1}
		k\notin\mathcal{A}_{\rm inc}~~~{\rm and~~~}\|n_k\|\leq\kappa_nh(x_k).
	\end{eqnarray}
(G6)~~There exists a constant $M_A>0$ such that
	\begin{eqnarray*}
		\varrho_{\min}(A_k)\geq M_A
	\end{eqnarray*}
for $k\notin \mathcal{A}_{\rm inc}$, where $\varrho_{\min}$ is the smallest singular value of $A_k$.

Using (G1), we can deduce that $\{\ell(x_k)\}$ is bounded below and $\{h(x_k)\}$ is bounded. Hence,  there exist constants $\ell_{\min}$ and $h_{\max}>0$ such that $\ell_{\min}\leq \ell(x_k)$ and $0\leq h(x_k)\leq h_{\max}$ for all $k$.

From (G3), we can conclude that
	\begin{eqnarray}\label{eq3.2}
		\|P_kH_kP_k\|\leq\|P_k\|^2\|H_k\|\leq M_H.
	\end{eqnarray}

Moreover, from (G1) and (G6), one finds that there exist constants $M_{\lambda},M_{\lambda}^{'}>0$ such that for all $k$
	\begin{eqnarray}\label{eq3.3}
		\|\lambda_k\|\leq M_{\lambda},~~~\|\nabla\lambda_{k}\|\leq M_{\lambda}^{'}.
	\end{eqnarray}
\vspace{-1cm}
\subsection{Preliminary results}
The next lemma provides the reduction in $f(x)$ predicted from the subproblem \eqref{qjt}.
\begin{lemma}\label{lem3.4}
	Suppose that $k\notin\mathcal{A}_{\rm inc}$, and the step $t_k^c$ is the Cauchy step for \eqref{99}. Then
		\begin{eqnarray}\label{lem1}
			f(x_k)-m_{k}^{t}(t_k)\geq \frac{\|P_kg_{k}\|}{6\sqrt{2}}{\rm min}\bigg\{\frac{\|P_kg_{k}\|}{1+\|H_{k}\|}, \frac{1}{2}\sqrt{\frac{\|P_kg_{k}\|}{\sigma_{k}}}\bigg\}
		\end{eqnarray}
	for all $k\geq0$.
\end{lemma}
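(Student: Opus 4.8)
The plan is to adapt the classical Cauchy-point decrease estimate of the unconstrained ARC theory to the projected model \eqref{99}. Since $k\notin\mathcal{A}_{\rm inc}$, condition \eqref{nkjie} holds and $t_k$ is genuinely computed from \eqref{qjt}; as the approximate minimiser is required to do at least as well as the Cauchy step, $m_k^t(t_k)\le m_k^t(t_k^c)$, so it suffices to establish \eqref{lem1} with $t_k^c$ in place of $t_k$. I would first identify $t_k^c$ explicitly: the model \eqref{99} is minimised over the range of $P_k$, the null space of $A_k$, so its steepest-descent direction at the origin within that subspace is $-P_kg_k$, whence $t_k^c=-\alpha^c P_kg_k$ where $\alpha^c\ge 0$ minimises $\phi(\alpha):=m_k^t(-\alpha P_kg_k)$.

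Using that $P_k$ is symmetric and idempotent, hence $g_k^TP_kg_k=\|P_kg_k\|^2$ and $\|-\alpha P_kg_k\|=\alpha\|P_kg_k\|$, the objective along this ray becomes the univariate cubic
\begin{equation*}
f(x_k)-\phi(\alpha)=\alpha\|P_kg_k\|^2-\tfrac12\alpha^2(P_kg_k)^TH_k(P_kg_k)-\tfrac13\sigma_k\alpha^3\|P_kg_k\|^3.
\end{equation*}
Bounding the curvature term by $(P_kg_k)^TH_k(P_kg_k)\le\|H_k\|\,\|P_kg_k\|^2$ can only lower the right-hand side, and since $t_k^c$ minimises $\phi$, for every $\alpha\ge 0$ we obtain
\begin{equation*}
f(x_k)-m_k^t(t_k^c)\ \ge\ \|P_kg_k\|^2\Big[\alpha-\tfrac12\|H_k\|\alpha^2-\tfrac13\sigma_k\|P_kg_k\|\alpha^3\Big].
\end{equation*}

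It then remains to choose $\alpha$ retaining a fixed fraction of the linear term while keeping the quadratic and cubic corrections small. I would take $\alpha_0=\min\{c_1/(1+\|H_k\|),\,c_2/\sqrt{\sigma_k\|P_kg_k\|}\}$ and split into two cases. When the first argument is active (the curvature-dominated regime) the surviving decrease is proportional to $\|P_kg_k\|^2/(1+\|H_k\|)$, producing the term $\|P_kg_k\|/(1+\|H_k\|)$; when the second is active (the regularisation-dominated regime) it is proportional to $\|P_kg_k\|^{3/2}/\sqrt{\sigma_k}$, producing the term $\tfrac12\sqrt{\|P_kg_k\|/\sigma_k}$. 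Together these yield exactly the minimum on the right of \eqref{lem1}.

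The delicate and main part is the bookkeeping of constants: the thresholds $c_1,c_2$ in $\alpha_0$ (or, equivalently, the exact positive root of $\phi'(\alpha)=0$) must be tuned so that the retained fraction of $\alpha\|P_kg_k\|^2$ collapses precisely to the factor $1/(6\sqrt2)$ together with the two stated terms, and writing $1+\|H_k\|$ rather than $\|H_k\|$ guards against the degenerate case $H_k=0$. Note that the sign of $(P_kg_k)^TH_k(P_kg_k)$ plays no role, since the upper bound $\|H_k\|\,\|P_kg_k\|^2$ is valid irrespective of \textnormal{(G4)}, so no definiteness hypothesis is needed for this estimate.
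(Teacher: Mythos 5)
Your proposal is correct and follows essentially the same route as the paper: evaluate the model along the ray $-\beta P_kg_k$, bound the curvature term by $\|H_k\|\|P_kg_k\|^2$ via Cauchy--Schwarz, and pick a step size equal to the minimum of a curvature-dominated threshold and a regularisation-dominated threshold (the paper's choice is $\theta_k=1/(\sqrt2\max\{1+\|H_k\|,\,2\sqrt{\sigma_k\|P_kg_k\|}\})$, i.e.\ your $\alpha_0$ with $c_1=1/\sqrt2$ and $c_2=1/(2\sqrt2)$, which makes the bracket at least $1/6$ and yields the factor $1/(6\sqrt2)$). Your observation that (G4) is not needed here also matches the paper, which does not invoke it in this lemma.
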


\begin{proof}
	We can rewrite \eqref{99} as
	\begin{eqnarray}\label{12}
		m_k^{t}(t_k)=f(x_k)+(P_kg_k)^Tt_k+\frac{1}{2}t_k^TH_kt_k+\frac{1}{3}\sigma_k\|t_k\|^3
	\end{eqnarray}
	because of the definition of $P_k$. Hence, the Cauchy step $t_k^c$ for \eqref{99} is
	\begin{eqnarray}\label{tkNN}
		t_k^c=-\beta_k^cP_kg_k~~{\rm and}~~\beta_k^c={\rm arg} {\underset{\beta\in\mathbb{R}^+}{\mbox{min}}}m^t_k(-\beta P_kg_k).
	\end{eqnarray}
	For any $\beta\geq0$, combining the Cauchy-Schwarz and \eqref{tkNN}, one finds that
	\begin{eqnarray}\label{mt}
		&&f(x_k)-m_{k}^{t}(t_k) \nonumber\\
		&\geq& f(x_k)-m_{k}^{t}(-\beta P_kg_k)\nonumber\\
		&=& \beta \|P_kg_{k}\|^{2}-\frac{1}{2}\beta^2(P_kg_{k})^TH_kP_kg_k-\frac{1}{3}\beta^3\sigma_k\|P_kg_k\|^3 \nonumber\\
		&\geq & \beta\|P_kg_{k}\|^{2}(1-\frac{1}{2}\beta \|H_k\|-\frac{1}{3}\beta^2\sigma_k\|P_kg_k\|).
	\end{eqnarray}
	Denote
	\begin{eqnarray*}
		\hat{\beta}_k&=&\frac{3}{2\sigma_k\|P_kg_k\|}\left(-\frac{1}{2}\|H_k\|+\sqrt{\frac{1}{4}\|H_k\|^2
			+\frac{4}{3}\sigma_k\|P_kg_k\|}\right)\\
		&=&2\left(\frac{1}{2}\|H_k\|+\sqrt{\frac{1}{4}\|H_k\|^2
			+\frac{4}{3}\sigma_k\|P_kg_k\|}\right)^{-1}.
	\end{eqnarray*}
	Then for $\beta\in[0, \hat{\beta}_k]$, when $1-\frac{1}{2}\beta \|H_k\|-\frac{1}{3}\beta^2\sigma_k\|P_kg_k\|\geq0$, it follows that $f(x_k)\geq m_{k}^{t}(t_k)$.\\
	Let
	\begin{eqnarray}\label{theta}
		\theta_k:=\frac{1}{\sqrt{2}~\mbox{max}\bigg\{1+\|H_k\|, 2\sqrt{\sigma_k\|P_kg_k\|}\bigg\}}.
	\end{eqnarray}
	By employing the inequalities
	\begin{eqnarray*}
		&&\sqrt{\frac{1}{4}\|H_k\|^2+\frac{4}{3}{\sigma_k\|P_kg_k\|}} \\
		&\leq& \frac{1}{2}\|H_k\|+\frac{2}{\sqrt{3}}\sqrt{\sigma_k\|P_kg_k\|}\\
		&\leq& 2~\mbox{max}\bigg\{\frac{1}{2}\|H_k\|, \frac{2}{\sqrt{3}}\sqrt{\sigma_k\|P_kg_k\|}\bigg\}\\
		&\leq& \sqrt{2}~\mbox{max}\bigg\{1+\|H_k\|, 2\sqrt{\sigma_k\|P_kg_k\|}\bigg\}
	\end{eqnarray*}
	and
	\begin{eqnarray*}
		\frac{1}{2}\|H_k\| \leq \sqrt{2}~\mbox{max}\bigg\{1+\|H_k\|, 2\sqrt{\sigma_k\|P_kg_k\|}\bigg\},
	\end{eqnarray*}
	it follows that $0<\theta_k \leq \hat{\beta}_k$.
	Thus replace $\beta$ in \eqref{mt} with $\theta_k$, we obtain that
	\begin{eqnarray}\label{lema11}
		&&f(x_k)-m_{k}^{t}(t_k) \nonumber\\
		&\geq& \frac{\|P_kg_k\|^2(1-\frac{1}{2}\theta_k\|H_k\|-\frac{1}{3}\theta_k^2\sigma_k\|P_kg_k\|)}
		{\sqrt{2}~\mbox{max}\bigg\{1+\|H_k\|, 2\sqrt{\sigma_k\|P_kg_k\|}\bigg\}}.
	\end{eqnarray}
	Combining the definition \eqref{theta} of $\theta_k$, it follows that $\theta_k\|H_k\| \leq 1$ and $\theta_k^2\sigma_k\|P_kg_k\| \leq 1$. Hence, the numerator of \eqref{lema11} is bounded below by $\frac{1}{6}\|P_kg_k\|^2$, which together with \eqref{lema11}, implies that \eqref{lem1} holds.
\end{proof}

The following result gives a critical bound on the tangential step, which plays a role in following analysis.
	\begin{lemma}\label{lemma4}
	Suppose that (G4) holds. Then the tangential step satisfies
	\begin{eqnarray}\label{tkjie}
		\|t_k\|\leq \sqrt{3}\sqrt{\frac{\|P_kg_k\|}{\sigma_k}},~~~k\geq0.
	\end{eqnarray}
\end{lemma}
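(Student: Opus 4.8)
The plan is to exploit the (approximate) optimality of the tangential step for the model $m_k^t$ together with the semidefiniteness hypothesis (G4), and then to rearrange a single scalar inequality. Since $t_k=P_ku_k$ with $u_k$ an (approximate) solution of \eqref{qjt}, the step $t_k$ reduces $m_k^t$ at least as much as the trivial choice $t=0$ (indeed at least as much as the Cauchy step of Lemma \ref{lem3.4}), so that $m_k^t(t_k)\le m_k^t(0)=f(x_k)$. Writing this out through the null-space form \eqref{12} of the model (which coincides with \eqref{99} because $P_kt_k=t_k$), I would obtain
\begin{eqnarray*}
(P_kg_k)^Tt_k+\tfrac{1}{2}t_k^TH_kt_k+\tfrac{1}{3}\sigma_k\|t_k\|^3\le 0.
\end{eqnarray*}

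Next I would use that $t_k=P_ku_k$ lies in the null space of $A_k$, since the range of the projector $P_k$ defined in \eqref{eq2.1} is exactly that subspace. Hypothesis (G4) then gives $t_k^TH_kt_k\ge 0$, so the quadratic term may be dropped. Applying the Cauchy--Schwarz inequality to the linear term, $-(P_kg_k)^Tt_k\le\|P_kg_k\|\,\|t_k\|$, leaves
\begin{eqnarray*}
\tfrac{1}{3}\sigma_k\|t_k\|^3\le\|P_kg_k\|\,\|t_k\|.
\end{eqnarray*}
If $t_k=0$ the bound \eqref{tkjie} is immediate; otherwise I would divide by $\|t_k\|>0$ to get $\sigma_k\|t_k\|^2\le 3\|P_kg_k\|$ and take square roots, arriving at \eqref{tkjie}.

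The individual estimates are routine; the one point requiring care is the sign of the Hessian contribution. Everything hinges on recognizing that the range of $P_k$ is the null space of $A_k$, so that (G4) applies to $t_k$ and guarantees $t_k^TH_kt_k\ge 0$. Without this, the cubic term alone could not absorb a possibly indefinite quadratic contribution and the clean bound would break down. The only other subtlety is justifying $m_k^t(t_k)\le f(x_k)$ when \eqref{qjt} is solved merely approximately, which is secured by insisting that $t_k$ do no worse than the zero step.
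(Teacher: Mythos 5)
Your proposal is correct and is essentially the paper's own argument: both rest on $m_k^t(t_k)\le f(x_k)$ (guaranteed by the Cauchy-decrease bound \eqref{lem1}), use (G4) on the range of $P_k$ to discard the quadratic term, and apply Cauchy--Schwarz to the linear term before rearranging. The only cosmetic difference is that the paper phrases the final step as a contradiction (assuming \eqref{tkjie} fails and deducing $m_k^t(t_k)>f(x_k)$), whereas you argue directly.
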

\begin{proof}
	Suppose that
	\begin{eqnarray}\label{lem3}
		\|t_k\|> \sqrt{3}\sqrt{\frac{\|P_kg_k\|}{\sigma_k}}
	\end{eqnarray}
	for $k\geq0$. Therefore, from (G4) and \eqref{12}, we have
	\begin{eqnarray*}
		&&m_k^t(t_k)-f(x_k)\\
		&=&(P_kg_k)^Tt_k+\frac{1}{2}t_k^TH_kt_k+\frac{1}{3}\sigma_k\|t_k\|^3\\
		&\geq &-\|t_k\|\|P_kg_k\|+\frac{1}{3}\sigma_k\|t_k\|^3.
	\end{eqnarray*}
	Due to \eqref{lem3}, $\frac{1}{3}\sigma_k\|t_k\|^3-\|t_k\|\|P_kg_k\|>0$. Then $m_k^t(t_k)-f(x_k)>0$, which contradicts \eqref{lem1}. Hence the announced claim follows.
\end{proof}
\vspace{-0.5cm}
\subsection{Feasibility}
\begin{lemma}\label{lem3.1}
	Suppose that Assumptions G hold. Suppose also that $\vert\mathcal{A}\vert<\infty$. Then
	\begin{eqnarray}\label{eq3.4}
		\underset{k\to\infty}{\rm lim}h(x_k)=0.
	\end{eqnarray}
\end{lemma}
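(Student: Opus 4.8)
The plan is to use the hypothesis $|\mathcal A|<\infty$ to pin down the structure of all late iterations and then read feasibility off the switching condition \eqref{999}. First I would fix an index $K$ with $k\notin\mathcal A$ for all $k\ge K$; since $\mathcal F_k\subsetneqq\mathcal F_{k+1}\iff k\in\mathcal A$, the filter is frozen, $\mathcal F_k=\mathcal F_K$, from $K$ on. I would then show that every $k\ge K$ is an $\ell$-type step accepted in Step 8. Indeed, acceptance in Step 9 explicitly adds $x_k$ to the filter, and, following the convention of \cite{2005Line}, invoking the feasibility restoration of Step 13 likewise enlarges it; both are therefore impossible for $k\notin\mathcal A$. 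Hence for $k\ge K$ the bound \eqref{nkjie} holds, $t_k$ is produced by \eqref{qjt}, the switching conditions \eqref{999} are met, and the Lagrangian Armijo condition \eqref{lang} holds with $m_k(\alpha_k)<0$.

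Next I would extract a summable decrease. Because $m_k(\alpha_k)<0$ and $0<\mu<1$, \eqref{lang} makes $\{\ell(x_k)\}_{k\ge K}$ strictly decreasing, and it is bounded below by $\ell_{\min}$ thanks to (G1); hence it converges. Summing \eqref{lang} telescopes to $\mu\sum_{k\ge K}\big(-m_k(\alpha_k)\big)\le\ell(x_K)-\ell_{\min}<\infty$, so $-m_k(\alpha_k)\to0$.

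Finally I would invoke the second inequality in \eqref{999}, namely $\kappa_h\,(h(x_k))^{\varsigma}<(-m_k(\alpha_k))^{\omega}(\alpha_k\sqrt{\sigma_k})^{\omega-1}$. Provided the factor $(\alpha_k\sqrt{\sigma_k})^{\omega-1}$ stays bounded, the right-hand side tends to $0$, forcing $h(x_k)\to0$ for $k\ge K$; as only finitely many indices precede $K$, this yields \eqref{eq3.4}.

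The crux is controlling $\alpha_k\sqrt{\sigma_k}$ when $\omega>1$. Since $\alpha_k\le1$, it suffices to bound $\{\sigma_k\}_{k\ge K}$ from above. I would obtain this from Step 11: on an $\ell$-type step \eqref{lang} and $m_k(\alpha_k)<0$ give $\rho_k\ge\mu$, so provided the constants satisfy $\mu\ge\eta_2$ (equivalently, once the model is accurate enough that $\rho_k\ge\eta_2$) the regularisation parameter is not increased and $\sigma_k\le\sigma_K$. The companion bounds $\sqrt{\sigma_k}\|t_k\|\le\sqrt3\sqrt{\|P_kg_k\|}$ from Lemma \ref{lemma4}, $\sqrt{\sigma_k}\|n_k\|\le\beta_1$ from \eqref{nkjie}, and the boundedness of $\|P_kg_k\|$ via (G1)--(G2), then keep every quantity in check. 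I expect the delicate point to be exactly ruling out $\sigma_k\to\infty$ along a putative infeasible subsequence, where $h(x_{k_i})\ge\epsilon>0$ would force $\alpha_{k_i}\sqrt{\sigma_{k_i}}\to\infty$ against $-m_k(\alpha_k)\to0$; this is where the interplay between the $\sigma$-update rule and model accuracy must be used carefully, and it is the main obstacle of the proof.
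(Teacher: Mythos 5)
Your opening moves match the paper's: freeze the filter after some $K_0$, note that every later iteration is an $f$-type (Step 8) acceptance so that \eqref{999} and \eqref{lang} both hold, telescope \eqref{lang} against the lower bound on $\ell$ from (G1), and close the $\omega=1$ case immediately. The gap is exactly where you locate it yourself, and it is fatal to your route: for $\omega>1$ you need $(\alpha_k\sqrt{\sigma_k})^{\omega-1}$ bounded, and your proposed mechanism for this does not work. From \eqref{lang} you only get $\rho_k\geq\mu$, and the algorithm imposes no relation between $\mu$ and $\eta_1,\eta_2$ (all are free constants in $(0,1)$), so Step 11 may increase $\sigma_k$ at every accepted iteration; your parenthetical ``provided $\mu\ge\eta_2$'' is an extra hypothesis not present in the statement. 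Moreover, boundedness of $\{\sigma_k\}$ is only established in this paper much later (Lemma \ref{lemma3.10}) and under the additional assumption $\|P_kg_k\|\geq\epsilon$ for all $k$, which is unavailable (and generally false) in Lemma \ref{lem3.1}. So the step ``RHS of \eqref{999} tends to $0$'' is unsupported.

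The paper closes this hole without ever bounding $\sigma_k$. Using $\|t_k\|\leq\sqrt{3\|P_kg_k\|/\sigma_k}$ from Lemma \ref{lemma4}, $\|n_k\|\leq\beta_1/\sqrt{\sigma_k}$ from \eqref{nkjie}, and the uniform bounds on $g_k$, $H_k$, $\nabla\lambda_k$, $c$, it derives the \emph{upper} bound $-m_k(\alpha_k)\leq\overline{M}\,\alpha_k\,\sigma_k^{-1/2}$ (see \eqref{dkjie}): every term of the model carries enough negative powers of $\sigma_k$ to cancel the potentially large factor. Feeding this upper bound into the switching condition gives $\alpha_k/\sqrt{\sigma_k}>\kappa_h(h(x_k))^{\varsigma}/\overline{M}^{\omega}$, i.e.\ the troublesome factor is bounded \emph{below} by a power of $h(x_k)$; substituting back into \eqref{999} then yields $-m_k(\alpha_k)>\overline{M}^{1-\omega}\kappa_h(h(x_k))^{\varsigma}$ with all $\alpha_k$ and $\sigma_k$ dependence eliminated, and hence $\ell(x_k)-\ell(x_{k+1})\geq\widetilde{M}(h(x_k))^{\varsigma}$ as in \eqref{eq3.6}. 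Your telescoping argument then finishes as you intended. If you want to salvage your write-up, replace the attempt to bound $\sigma_k$ by this two-sided use of the switching condition.
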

\begin{proof}
	Due to $\vert\mathcal{A}\vert<\infty$, one finds that there exists an integer $K_0\in\mathbb{N}$ so that $x_k$ is not acceptable for the filter for all $k>K_0$. Note that $k\notin\mathcal{A}_{\rm inc}\subseteq\mathcal{A}$, which implies that both \eqref{999} and \eqref{lang} hold for $\alpha_k$, for all $k>K_0$. At the same time, from the assumptions (G1)-(G3), and (G6), we get that
	\begin{eqnarray*}
		\|P_kg_k\|\leq M_{pg}
	\end{eqnarray*}
	for all $k$, where $M_{pg}$ is a constant. Hence, along with \eqref{tkjie}, it follows that
	\begin{eqnarray}\label{dkjie}
		&&\|d_k\|\\
		&\leq&\|n_k\|+\|t_k\|\nonumber\\
		&\leq&{\min}\bigg\{\sqrt{3M_{pg}}+\beta_1,\sqrt{3M_{pg}}+\beta_1\beta_2\sigma_k^{-\frac{\beta_3}{2}}\bigg\}\sigma_k^{-\frac{1}{2}}.
	\end{eqnarray}
	
	Then we distinguish two cases, where $k\notin\mathcal{A}$.\\
	$\mathbf{Case~1}$ ($\omega>1$). Using \eqref{999} and Assumptions G, one finds that
	\begin{eqnarray*}
		&&\kappa_h(h(x_k))^{\varsigma}\\
		&<&(-m_k(\alpha_k))^{\omega}\alpha_k^{1-\omega}\sqrt{\sigma_k}^{\omega-1}\\
		&=&[-g_k^Tt_k-\frac{1}{2}\alpha t_k^TH_kt_k-\frac{1}{3}\alpha^2\sigma_k\|t_k\|^3+(\nabla\lambda_k^Td_k)^Tc(x_k)]^{\omega}\alpha_k\sqrt{\sigma_k}^{\omega-1}\\
		&\leq&\left(\|g_k\|\|t_k\|+\frac{1}{2}\|H_k\|\|t_k\|^2+\frac{1}{3}\sigma_k\|t_k\|^3+\|\nabla\lambda_k\|\|d_k\|\|c(x_k)\|\right)^{\omega}
		\alpha_k\sqrt{\sigma_k}^{\omega-1}\\
		&\overset{\eqref{dkjie}}{\leq}&\left(\|g_k\|\sqrt{3M_{pg}}+\frac{\sqrt{3M_{pg}}}{2}\|H_k\|\|t_k\|+M_{pg}\sqrt{3M_{pg}}\right.\\
&&+\left(\sqrt{3M_{pg}}+\beta_1\right)\|\nabla\lambda_k\|\|c(x_k)\| \bigg)^{\omega}\alpha_k\sigma_k^{-\frac{1}{2}}\\		&\overset{\eqref{eq3.3}}{\leq}&\left(\sqrt{3M_{pg}}M_{g}+\frac{3M_{pg}}{2}M_H\sigma_{\min}^{-\frac{1}{2}}+M_{pg}\sqrt{3M_{pg}}+\left(\sqrt{3M_{pg}}+\beta_1\right)M_{\lambda}^{'}M_c\right)^{\omega}
		\alpha_k\sigma_k^{-\frac{1}{2}}\\
		&=&\alpha_k\overline{M}^{\omega}\sigma_k^{-\frac{1}{2}},
	\end{eqnarray*}
	where
$	M_c=\max_{x\in X}\|c(x)\|$
	and
$$	\overline{M}=
		\sqrt{3M_{pg}}M_g+\frac{3M_{pg}}{2}M_H\sigma_{\min}^{-1/2}+M_{pg}\sqrt{3M_{pg}}+\left(\sqrt{3M_{pg}}+\beta_1\right)M_{\lambda}^{'}M_c.
$$
	Combining this result and $1-1/\omega>0$, we can see that
	\begin{eqnarray}\label{eq3.5}
		\left(h(x_k)\right)^{\frac{\varsigma}{\omega}}>\left(\frac{\kappa_h}{\overline{M}^{\omega}}\right)^{1-1/\omega}(\alpha_k\frac{1}{\sqrt{\sigma_k}})^{1/\omega-1}
		\left(h(x_k)\right)^{\varsigma}.
	\end{eqnarray}
	Calling upon \eqref{lang}, it holds that
	\begin{eqnarray*}
		&&\ell(x_k)-\ell(x_{k+1})\\
		&\geq&-\mu m_k(\alpha_k)\\
		&\overset{\eqref{999}}{\geq}&\mu \kappa_h^{\frac{1}{\omega}}(\alpha_k\frac{1}{\sqrt{\sigma_k}})^{1-\frac{1}{\omega}}(h(x_k))^{\frac{\varsigma}{\omega}}\\
		&\overset{\eqref{eq3.5}}{>}&\mu \overline{M}^{1-\omega}\kappa_h(h(x_k))^{\varsigma}.
	\end{eqnarray*}
	$\mathbf{Case~2}$ ($\omega=1$). \eqref{999} and \eqref{lang} together indicate that
	\begin{eqnarray*}
		\ell(x_k)-\ell(x_{k+1})\geq \mu \kappa_h(h(x_k))^{\varsigma}.
	\end{eqnarray*}
	
	No matter what case happens, we can get that
	\begin{eqnarray}\label{eq3.6}
		\ell(x_k)-\ell(x_{k+1})\geq \widetilde{M}(h(x_k))^{\varsigma}\geq0,
	\end{eqnarray}
	where $\widetilde{M}>0$ is a constant. Following (G1), it follows that $\ell(x_k)$ is bounded below, and from \eqref{eq3.6}, $\ell(x_k)$ is also monotonically decreasing for all $k>K_0$. Hence, \eqref{eq3.6} implies that \eqref{eq3.4} holds as $k$ tends to infinity.
\end{proof}

From Lemma  1 of  \cite{Fletcher2002} and its corollary, the following results hold as well.
\begin{lemma}\label{222222}
	Let Assumptions G hold. Suppose also that $\vert\mathcal{A}\vert=\infty$. Then there exists a subsequence $\{k_i\}\subseteq\mathcal{A}$ such that
	\begin{eqnarray}\label{e3.22}
		\underset{i\to \infty}{\rm lim}h(x_{k_i})=0.
	\end{eqnarray}
\end{lemma}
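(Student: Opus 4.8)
The plan is to reproduce the classical filter argument of Fletcher and Leyffer, exploiting the fact that the filter entries all live in a fixed compact set together with the non-domination enforced by the acceptance test \eqref{2.17}. First I would confirm that every filter entry lies in a bounded box. By (G1) the domain $X$ is closed and bounded, and by (G2) and (G6) the functions $f$, $c$ and the multiplier estimate $\lambda(x)$ are continuous on $X$; hence $h(x_k)=\|c(x_k)\|$ and $\ell(x_k)=f(x_k)-\lambda_k^Tc(x_k)$ are bounded, say $(h(x_k),\ell(x_k))\in[0,h_{\max}]\times[\ell_{\min},\ell_{\max}]$ for all $k$, where the bounds $\ell_{\min}$ and $h_{\max}$ are those already recorded and an upper bound on $\ell$ follows from \eqref{eq3.3} together with $\|c(x_k)\|\leq M_c$. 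Since $\vert\mathcal{A}\vert=\infty$, the infinitely many entries $\{(h(x_k),\ell(x_k)):k\in\mathcal{A}\}$ admit, by Bolzano--Weierstrass, a convergent subsequence indexed by $\{k_i\}\subseteq\mathcal{A}$ with $(h(x_{k_i}),\ell(x_{k_i}))\to(h^*,\ell^*)$.

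Next I would use the monotone enlargement of the filter, $\mathcal{F}_k\subsetneqq\mathcal{F}_{k+1}\iff k\in\mathcal{A}$, to pin down a relation between distinct entries of this subsequence. For any two indices $k_i<k_j$ of the subsequence that are not consecutive, the entry $(h(x_{k_i}),\ell(x_{k_i}))$ already belongs to the filter at the iteration where $x_{k_j}$ is accepted, so \eqref{2.17} forces the disjunction $h(x_{k_j})\leq(1-\gamma_h)h(x_{k_i})$ or $\ell(x_{k_j})\leq\ell(x_{k_i})-\gamma_\ell h(x_{k_i})$.

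Finally I would argue by contradiction: assume $h^*>0$. Fixing a large index $k_i$ and letting $j\to\infty$, the pigeonhole principle selects one branch of the disjunction that holds for infinitely many $j$, and passing to the limit gives either $h^*\leq(1-\gamma_h)h(x_{k_i})$ or $\ell^*\leq\ell(x_{k_i})-\gamma_\ell h(x_{k_i})$; letting $k_i\to\infty$ and again fixing a single branch along a further subsequence yields $h^*\leq(1-\gamma_h)h^*$ or $\ell^*\leq\ell^*-\gamma_\ell h^*$, i.e. $\gamma_h h^*\leq0$ or $\gamma_\ell h^*\leq0$. Since $\gamma_h,\gamma_\ell\in(0,1)$, both are impossible when $h^*>0$, forcing $h^*=0$ and hence \eqref{e3.22}. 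I expect the main delicacy to be the bookkeeping needed to push the ``or'' in \eqref{2.17} through the double limit—namely the repeated pigeonhole step that isolates one branch along a nested subsequence—while the only edge case, consecutive $k_i$ and $k_j$, is harmless since the asymptotic argument uses only pairs of indices that are arbitrarily far apart.
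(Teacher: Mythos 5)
Your argument is correct and is essentially the argument the paper invokes: the paper does not spell out a proof but simply appeals to Lemma~1 of Fletcher, Leyffer and Toint (reference \cite{Fletcher2002}), whose proof is exactly the compactness-plus-Bolzano--Weierstrass envelope argument you reconstruct, including the pigeonhole handling of the disjunction in \eqref{2.17}. Your care with the consecutive-index edge case and with the upper bound on $\ell$ is consistent with what that cited lemma requires, so no gap.
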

\begin{theorem}\label{them1}
	Let Assumptions G hold. Then
	\begin{eqnarray}
		{\underset{k\rightarrow\infty}{\rm lim}}h(x_k)=0.
	\end{eqnarray}
\end{theorem}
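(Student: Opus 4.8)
The plan is to split the argument according to whether the filter is augmented finitely or infinitely often, i.e.\ whether $|\mathcal{A}|<\infty$ or $|\mathcal{A}|=\infty$. In the first case there is nothing new to do: Lemma~\ref{lem3.1} is \emph{exactly} the assertion $\lim_{k\to\infty}h(x_k)=0$, so I would simply invoke it. All the substance lies in the second case, where Lemma~\ref{222222} only delivers a subsequence $\{k_i\}\subseteq\mathcal{A}$ with $h(x_{k_i})\to 0$, and the task is to upgrade this to convergence of the \emph{whole} sequence.

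For the case $|\mathcal{A}|=\infty$ I would argue by contradiction, assuming $\limsup_{k\to\infty}h(x_k)=2\epsilon>0$, so that $h(x_k)\ge\epsilon$ for infinitely many $k$. First I would strengthen Lemma~\ref{222222} from a subsequence to all of $\mathcal{A}$, namely $\lim_{k\in\mathcal{A},\,k\to\infty}h(x_k)=0$. This rests on the facts that the filter is monotonically enlarged and that every filter iterate $x_{k'}$ ($k'\in\mathcal{A}$) passed the acceptance test \eqref{2.17} against every earlier entry, so any two filter entries are mutually separated by the margins $\gamma_h,\gamma_\ell$; since $\ell(x_k)\ge\ell_{\min}$ and $0\le h(x_k)\le h_{\max}$ by (G1), a packing/area argument in the $(h,\ell)$-plane shows that only finitely many filter entries can have $h\ge\epsilon$. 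Consequently the ``bad'' indices with $h(x_k)\ge\epsilon$ lie, with finitely many exceptions, outside $\mathcal{A}$, hence are accepted through the switching/Armijo branch (Step~8), so that both \eqref{999} and \eqref{lang} hold there.

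On each such bad index I would reuse verbatim the chain of estimates from the proof of Lemma~\ref{lem3.1}, which uses only that \eqref{999} and \eqref{lang} hold together with Assumptions~G, to obtain the fixed decrease
\begin{eqnarray*}
	\ell(x_k)-\ell(x_{k+1})\ge \widetilde{M}\,(h(x_k))^{\varsigma}\ge \widetilde{M}\,\epsilon^{\varsigma}>0.
\end{eqnarray*}
Thus $\ell$ drops by at least the fixed amount $\widetilde{M}\epsilon^{\varsigma}$ infinitely often, while on every \emph{other} non-filter index the same estimate \eqref{eq3.6} gives $\ell(x_k)-\ell(x_{k+1})\ge 0$. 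Since every non-filter iterate arises from Step~8, increases of $\ell$ can occur only on the filter indices $k\in\mathcal{A}$, and $\ell$ is bounded below by $\ell_{\min}$.

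The main obstacle is precisely to show that these increases cannot overwhelm the infinitely many fixed drops, i.e.\ that the total positive variation of $\ell$ along $\mathcal{A}$ is finite (or at least dominated by the accumulated decrease). Here I would exploit the already-established fact that $h(x_k)\to 0$ for $k\in\mathcal{A}$ together with the acceptance rule \eqref{2.17}, which forces each filter iterate to improve either $h$ or $\ell$ relative to every earlier entry; bounding $\ell(x_{k+1})-\ell(x_k)$ on filter iterations in terms of the vanishing infeasibility and the step length is the delicate quantitative point. Once the $\ell$-increases are controlled, the infinitely many fixed drops drive $\ell(x_k)\to-\infty$, contradicting $\ell(x_k)\ge\ell_{\min}$. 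Hence $\limsup_{k\to\infty}h(x_k)=0$, which together with the finite case yields $\lim_{k\to\infty}h(x_k)=0$.
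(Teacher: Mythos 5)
Your case split and the treatment of $|\mathcal{A}|<\infty$ match the paper, which itself only sketches the hard case by deferring to Lemma 8 of \cite{Gu2011}; your strengthening of Lemma \ref{222222} from a subsequence to $\lim_{k\in\mathcal{A}}h(x_k)=0$ via the packing argument in the $(h,\ell)$-plane is also the right (and standard, Fletcher--Leyffer) step. The problem is exactly the point you flag as ``the delicate quantitative point'': controlling the increases of $\ell$ at filter iterations. This is a genuine gap, and the repair you propose --- bounding the total positive variation of $\ell$ along $\mathcal{A}$ in terms of the vanishing infeasibility --- is not available. At a filter iteration accepted through the $h$-branch of \eqref{2.17}, the algorithm imposes no upper bound on $\ell(x_{k+1})-\ell(x_k)$ beyond the smoothness estimate \eqref{3.16a}, which only yields an increase of order $\sigma_k^{-1}\le\sigma_{\min}^{-1}$, a constant; with infinitely many filter iterations the accumulated positive variation can a priori diverge, so the ``fixed drops versus controlled increases'' bookkeeping does not close.

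The missing idea (the one behind Lemma 8 of \cite{Gu2011} and the corresponding theorem in \cite{2005Line}) is to bypass the increases entirely by using the invariant that every iterate is acceptable to the current filter, i.e.\ $(h(x_{k}),\ell(x_{k}))\notin\mathcal{F}_{k}$ for all $k$. If $h(x_{k_j})\ge\epsilon$ along a bad subsequence with $k_j\notin\mathcal{A}$, let $l$ be the \emph{first} filter index in $(k_j,k_{j+1})$, if any. Since $h(x_l)\to0$ on $\mathcal{A}$, the $h$-branch of acceptability of $x_{k_{j+1}}$ against the entry $(h(x_l),\ell(x_l))$ eventually fails (because $h(x_{k_{j+1}})\ge\epsilon>(1-\gamma_h)h(x_l)$), forcing $\ell(x_{k_{j+1}})\le\ell(x_l)-\gamma_\ell h(x_l)\le\ell(x_l)$; and since every iteration strictly between $k_j$ and $l$ is an Armijo iteration, $\ell(x_l)\le\ell(x_{k_j+1})\le\ell(x_{k_j})-\widetilde{M}\epsilon^{\varsigma}$. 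The same chain holds trivially when there is no filter index between $k_j$ and $k_{j+1}$. Hence $\ell(x_{k_{j+1}})\le\ell(x_{k_j})-\widetilde{M}\epsilon^{\varsigma}$ for all large $j$, so $\ell(x_{k_j})\to-\infty$, contradicting the lower bound $\ell_{\min}$. Without this filter-membership argument your proof does not go through.
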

\begin{proof}
	From Lemma \ref{lem3.1} and Lemma \ref{222222}, we can show that the sequence $h(x_k)\to 0$ using the idea of Lemma 8 in \cite{Gu2011}.
\end{proof}
\vspace{-0.5cm}
\subsection{Optimality}
\begin{lemma}\label{lemma3.5}
	Suppose that Assumptions G hold, $k\notin\mathcal{A}_{\rm inc}$, and that \eqref{lem1} holds. Suppose furthermore that
	\begin{eqnarray}\label{38}
		\|P_kg_k\|\geq\epsilon
	\end{eqnarray}
	for a constant $\epsilon>0$ independent of $k$, and that
	\begin{eqnarray}\label{fm}
		\sigma_k\geq\Delta_1:=\frac{(1+M_H)^2}{4\epsilon},
	\end{eqnarray}
	\begin{eqnarray}\label{3.10}
		h(x_k)\leq\frac{\epsilon\sqrt{\epsilon}}{24\sqrt{2}M_{\lambda}^{'}(\sqrt{3M_{pg}}+\beta_1)}.
	\end{eqnarray}
	Then
	\begin{eqnarray}\label{3.11}
		-m_k(\alpha)\geq \alpha\frac{\epsilon\sqrt{\epsilon}}{24\sqrt{2}}\frac{1}{\sqrt{\sigma_k}}
	\end{eqnarray}
	for all $\alpha\in(0,1]$.
\end{lemma}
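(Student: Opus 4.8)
The plan is to split the model $m_k(\alpha)$ defined in \eqref{eq2.12} into its pure cubic part, namely $q_k(\alpha):=\alpha g_k^Tt_k+\tfrac12\alpha^2t_k^TH_kt_k+\tfrac13\alpha^3\sigma_k\|t_k\|^3$, and the constraint-coupling term $-\alpha(\nabla\lambda_k^Td_k)^Tc(x_k)$, so that $-m_k(\alpha)=-q_k(\alpha)+\alpha(\nabla\lambda_k^Td_k)^Tc(x_k)$, and then to bound the two contributions separately. The governing idea is that the Cauchy decrease of the tangential model controls $-q_k(\alpha)$ from below, while the perturbation caused by infeasibility is made small through hypothesis \eqref{3.10}; the threshold in \eqref{3.10} is calibrated precisely so that the coupling term eats at most half of that decrease.

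For the cubic part I would first reduce the statement for general $\alpha\in(0,1]$ to the value $\alpha=1$. Since $t_k=P_ku_k$ lies in the null space of $A_k$, assumption (G4) gives $t_k^TH_kt_k\geq0$, and clearly $\sigma_k\|t_k\|^3\geq0$; using $\alpha^2\leq\alpha$ and $\alpha^3\leq\alpha$ on $(0,1]$ one gets $-q_k(\alpha)\geq\alpha\bigl(-g_k^Tt_k-\tfrac12t_k^TH_kt_k-\tfrac13\sigma_k\|t_k\|^3\bigr)=\alpha\bigl(f(x_k)-m_k^t(t_k)\bigr)$, the last equality following from the form \eqref{12} of $m_k^t$. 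Next I would invoke Lemma \ref{lem3.4}: under \eqref{38} and \eqref{fm} the inequality $4\sigma_k\|P_kg_k\|\geq(1+\|H_k\|)^2$ holds (because $\sigma_k\geq(1+M_H)^2/(4\epsilon)$, $\|P_kg_k\|\geq\epsilon$ and $\|H_k\|\leq M_H$), so the minimum in \eqref{lem1} is attained by its second branch. This yields $f(x_k)-m_k^t(t_k)\geq\tfrac{\|P_kg_k\|}{6\sqrt2}\cdot\tfrac12\sqrt{\|P_kg_k\|/\sigma_k}\geq\tfrac{\epsilon\sqrt\epsilon}{12\sqrt2}\tfrac1{\sqrt{\sigma_k}}$, hence $-q_k(\alpha)\geq\alpha\tfrac{\epsilon\sqrt\epsilon}{12\sqrt2}\tfrac1{\sqrt{\sigma_k}}$.

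For the coupling term I would apply Cauchy--Schwarz, $\bigl|(\nabla\lambda_k^Td_k)^Tc(x_k)\bigr|\leq\|\nabla\lambda_k\|\,\|d_k\|\,h(x_k)$, and then combine the bound $\|\nabla\lambda_k\|\leq M_\lambda'$ from \eqref{eq3.3}, the step estimate $\|d_k\|\leq(\sqrt{3M_{pg}}+\beta_1)\sigma_k^{-1/2}$ already derived in \eqref{dkjie}, and finally \eqref{3.10}. These collapse the product to $\bigl|\alpha(\nabla\lambda_k^Td_k)^Tc(x_k)\bigr|\leq\alpha\,M_\lambda'(\sqrt{3M_{pg}}+\beta_1)\sigma_k^{-1/2}h(x_k)\leq\alpha\tfrac{\epsilon\sqrt\epsilon}{24\sqrt2}\tfrac1{\sqrt{\sigma_k}}$. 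Subtracting this from the lower bound on $-q_k(\alpha)$ and using $\tfrac1{12\sqrt2}-\tfrac1{24\sqrt2}=\tfrac1{24\sqrt2}$ gives exactly \eqref{3.11}.

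I expect the only delicate points to be bookkeeping rather than conceptual. The first is the branch selection in Lemma \ref{lem3.4}: one must verify carefully that \eqref{fm} forces the square-root term to be the active minimum, since otherwise the power of $\sigma_k$ and of $\epsilon$ in the final bound would differ. The second, and the genuine structural hinge, is the reduction $-q_k(\alpha)\geq\alpha\,(f(x_k)-m_k^t(t_k))$, which is valid only because (G4) guarantees $t_k^TH_kt_k\geq0$ on the null space of $A_k$; without that sign information the $\alpha^2$ and $\alpha^3$ terms could not be absorbed. The remaining work is purely matching the two prefactors $1/(12\sqrt2)$ and $1/(24\sqrt2)$, which the stated threshold \eqref{3.10} is designed to reconcile.
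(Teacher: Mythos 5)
Your proposal is correct and follows essentially the same route as the paper's proof: bound the pure cubic part from below via (G4) and the Cauchy decrease of Lemma \ref{lem3.4} (with the second branch of the minimum active under \eqref{fm} and \eqref{38}), then absorb the coupling term $\alpha(\nabla\lambda_k^Td_k)^Tc(x_k)$ using \eqref{eq3.3}, \eqref{dkjie} and \eqref{3.10}. The only difference is that you make the branch-selection verification $4\sigma_k\|P_kg_k\|\geq(1+\|H_k\|)^2$ explicit, which the paper leaves implicit.
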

\begin{proof}
	From Lemma \ref{lem3.4}, \eqref{38} and \eqref{fm}, it follows that
	\begin{eqnarray}\label{3.12}
		-g_k^Tt_k-\frac{1}{2}t_k^TH_kt_k-\frac{1}{3}\sigma_k\|t_k\|^3\geq\frac{\epsilon\sqrt{\epsilon}}{12\sqrt{2}}\frac{1}{\sqrt{\sigma_k}}.
	\end{eqnarray}
	Calling upon (G4), we have that
	\begin{eqnarray}\label{Deltat}
		&&-m_k(\alpha)-\alpha(\nabla\lambda_k^Td_k)^Tc(x_k)\\
		&=&-\alpha g_k^Tt_k-\frac{1}{2}\alpha^2t_k^TH_kt_k-\frac{1}{3}\alpha^3\sigma_k\|t_k\|^3\nonumber\\
		&\overset{0<\alpha\leq1}{\geq}&-\alpha g_k^Tt_k-\frac{1}{2}\alpha t_k^TH_kt_k-\frac{1}{3}\alpha\sigma_k\|t_k\|^3\nonumber\\
		&\overset{\eqref{3.12}}{\geq}&\alpha\frac{\epsilon\sqrt{\epsilon}}{12\sqrt{2}}\frac{1}{\sqrt{\sigma_k}}.
	\end{eqnarray}
	Thus, it follows that
	\begin{eqnarray}\label{Deltan}
		&&-m_k(\alpha)\\
		&\geq&\alpha\left(\frac{\epsilon\sqrt{\epsilon}}{12\sqrt{2}}\frac{1}{\sqrt{\sigma_k}}+(\nabla\lambda_k^Td_k)^Tc(x_k)\right)\nonumber\\
		&\geq&\alpha\left(\frac{\epsilon\sqrt{\epsilon}}{12\sqrt{2}}\frac{1}{\sqrt{\sigma_k}}-\|\nabla\lambda_k\|\|d_k\|c(x_k)\right)\nonumber\\
		&\overset{\eqref{dkjie}}{\geq}&\alpha\frac{1}{\sqrt{\sigma_k}}\left(\frac{\epsilon\sqrt{\epsilon}}{12\sqrt{2}}-(\sqrt{3M_{pg}}+\beta_1)\|\nabla\lambda_k\|c(x_k)\right)\nonumber\\
		&\geq&\alpha\frac{\epsilon\sqrt{\epsilon}}{24\sqrt{2}}\frac{1}{\sqrt{\sigma_k}},
	\end{eqnarray}
	as announced.
\end{proof}

\begin{lemma}\label{111111}
	Suppose that (G1) holds. Then there exist constants $M_{h}$ and $M_{\ell}>1$ for which
	\begin{subequations}\label{3.16}
		\begin{align}
			\ell(x_k+\alpha d_k)-\ell(x_k)-m_k(\alpha)\leq M_\ell\alpha^2\sigma_k^{-1},\tag{\ref{3.16}{a}}\label{3.16a}\\
			h(x_k+\alpha d_k)-(1-\alpha)h(x_k)\leq M_{h}\alpha^2\|d_k\|^2,\tag{\ref{3.16}{b}}\label{3.16b}
		\end{align}
	\end{subequations}
	for all $k\notin\mathcal{A}_{\rm inc}$ and $\alpha\in(0,1]$.
\end{lemma}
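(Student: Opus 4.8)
The plan is to treat the two estimates separately, expanding both $\ell$ and $c$ to first order along the ray $x_k+\alpha d_k$ and controlling the second-order remainders through the Lipschitz continuity of $\nabla f$ and $\nabla c$ in (G2), together with the step bounds $\|d_k\|=O(\sigma_k^{-1/2})$ from \eqref{dkjie} and $\|t_k\|=O(\sigma_k^{-1/2})$ from \eqref{tkjie}. The recurring structural fact I would exploit is that the full step satisfies the linearized constraint, $A_kd_k=-c(x_k)$, since $n_k$ solves $A_kn_k=-c(x_k)$ by \eqref{qjn} while $t_k=P_ku_k$ lies in the null space of $A_k$.

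Part (b) is the shorter half. Writing $c(x_k+\alpha d_k)=c(x_k)+\alpha A_kd_k+O(\alpha^2\|d_k\|^2)$ and substituting $A_kd_k=-c(x_k)$ gives $c(x_k+\alpha d_k)=(1-\alpha)c(x_k)+O(\alpha^2\|d_k\|^2)$, the remainder being bounded by $\tfrac12 L_c\alpha^2\|d_k\|^2$ with $L_c$ a Lipschitz constant for $\nabla c$. Since $\alpha\in(0,1]$ makes $1-\alpha\ge 0$, the triangle inequality yields $h(x_k+\alpha d_k)\le(1-\alpha)h(x_k)+\tfrac12 L_c\alpha^2\|d_k\|^2$, which is \eqref{3.16b} upon setting $M_h:=\max\{2,\tfrac12 L_c\}$ to enforce $M_h>1$.

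For part (a) I would expand each ingredient of $\ell=f-\lambda^Tc$ from \eqref{eq2.2}. Taylor's theorem gives $f(x_k+\alpha d_k)-f(x_k)=\alpha g_k^Td_k+O(\alpha^2\|d_k\|^2)$, and the expansions $\lambda(x_k+\alpha d_k)=\lambda_k+\alpha\nabla\lambda_k^Td_k+O(\alpha^2\|d_k\|^2)$ and $c(x_k+\alpha d_k)=(1-\alpha)c(x_k)+O(\alpha^2\|d_k\|^2)$ are multiplied out (the first being legitimate since the stated smoothness and (G6) make $\lambda$ continuously differentiable with locally Lipschitz gradient on the compact $X$). Collecting terms, subtracting the model \eqref{eq2.12}, and writing $g_k^Td_k=g_k^Tn_k+g_k^Tt_k$, every term linear in $\alpha$ cancels except $\alpha(g_k^Tn_k+\lambda_k^Tc_k)$. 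The decisive algebraic step is that this quantity vanishes: from \eqref{qjn} and \eqref{eq2.3}, $g_k^Tn_k=-g_k^TA_k^T(A_kA_k^T)^{-1}c_k=-\lambda_k^Tc_k$. What survives is $\alpha^2(\nabla\lambda_k^Td_k)^Tc_k-\tfrac12\alpha^2 t_k^TH_kt_k-\tfrac13\alpha^3\sigma_k\|t_k\|^3$ plus the collected remainders.

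To finish I would bound these surviving terms from above. The two regularization terms are non-positive, since $t_k^TH_kt_k\ge0$ because $t_k$ lies in the null space of $A_k$ under (G4) and $\tfrac13\alpha^3\sigma_k\|t_k\|^3\ge0$, so both are simply discarded. Each remainder is $O(\alpha^2\|d_k\|^2)=O(\alpha^2\sigma_k^{-1})$ via \eqref{dkjie}, the bounds \eqref{eq3.3} on $\lambda_k,\nabla\lambda_k$, and $h(x_k)\le h_{\max}$. The one genuinely delicate term is $\alpha^2(\nabla\lambda_k^Td_k)^Tc_k$, which a crude estimate controls only as $O(\alpha^2\sigma_k^{-1/2})$; here the hypothesis $k\notin\mathcal{A}_{\rm inc}$ is essential, for then \eqref{nkjie} holds and, with $c_k=-A_kn_k$ and $\|A_k\|$ bounded on $X$, one gets $h(x_k)=O(\sigma_k^{-(1+\beta_3)/2})$, whence $\|d_k\|\,h(x_k)=O(\sigma_k^{-1-\beta_3/2})\le\sigma_{\min}^{-\beta_3/2}\sigma_k^{-1}$. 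This extra decay furnished by $\beta_3>0$ is exactly what upgrades the estimate to $O(\alpha^2\sigma_k^{-1})$, and absorbing all constants into a single $M_\ell>1$ yields \eqref{3.16a}. The main obstacle is thus twofold: verifying the cancellation $g_k^Tn_k+\lambda_k^Tc_k=0$, and recognizing that the $\lambda$-cross term attains the claimed $\sigma_k^{-1}$ rate only through the reinforced feasibility bound \eqref{nkjie} available away from $\mathcal{A}_{\rm inc}$.
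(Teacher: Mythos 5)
Your argument is correct and follows essentially the same route as the paper: Taylor expansions of $f$, $\lambda$, and $c$ along $x_k+\alpha d_k$, the identity $A_kd_k=-c(x_k)$, the cancellation $g_k^Tn_k=-\lambda_k^Tc(x_k)$ coming from \eqref{qjn} and \eqref{eq2.3}, and (G4) together with the step bounds to control what remains. If anything you are more careful than the paper on the cross term $\alpha^2(\nabla\lambda_k^Td_k)^Tc(x_k)$, which the paper silently absorbs into $O(\alpha^2\|d_k\|^2)$; the shortest justification is simply $h(x_k)=\|A_kn_k\|\leq\|A_k\|\,\|n_k\|\leq\|A_k\|\,\|d_k\|$, so your detour through the extra $\beta_3$-decay of \eqref{nkjie}, while valid, is not needed.
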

\begin{proof}
	It is easy to follow that \eqref{3.16b} holds from Taylor expansions.
	
	From \eqref{eq2.1}, \eqref{qjn} and \eqref{tn}, it follows that
	\begin{eqnarray}
		A_kd_k+c(x_k)=0.
	\end{eqnarray}
	Combining this result and the definition of $\ell(x)$ in \eqref{eq2.2}, we can have that
	\begin{eqnarray}
		&&\ell(x_k+\alpha d_k)-\ell(x_k)\nonumber\\
		&=&f(x_k+\alpha d_k)-\lambda(x_k+\alpha d_k)^Tc(x_k+\alpha d_k)-f_k+\lambda_k^Tc(x_k)\nonumber\\
		&=&\alpha g_k^Td_k+O(\alpha^2\|d_k\|^2)-[\lambda_k+\alpha\nabla\lambda(x_k)^Td_k+O(\alpha^2\|d_k\|^2)]^T\nonumber\\
		&&[c(x_k)+\alpha A_kd_k+O(\alpha^2\|d_k\|^2)]+\lambda_k^Tc(x_k)\nonumber\\
		&=&\alpha g_k^Td_k-(\lambda_k+\alpha\nabla\lambda_k^Td_k)^T(1-\alpha)c(x_k)+\lambda_k^Tc(x_k)+O(\alpha^2\|d_k\|^2)\nonumber\\
		&=&\alpha g_k^T(n_k+t_k)-(1-\alpha)\lambda_k^Tc(x_k)-\alpha(\nabla\lambda_k^Td_k)^Tc(x_k)+\lambda_k^Tc(x_k)+O(\alpha^2\|d_k\|^2)\nonumber\\
		&\overset{\eqref{eq2.3},\eqref{qjn}}{=}&\alpha g_k^Tt_k-\alpha(\nabla\lambda_k^Td_k)^Tc(x_k)+O(\alpha^2\|d_k\|^2).\nonumber
	\end{eqnarray}
	Therefore, it follows \eqref{dkjie} that
	\begin{eqnarray*}
		&&\ell(x_k+\alpha d_k)-\ell(x_k)-m_k(\alpha)\\
		&=&-\frac{1}{2}\alpha^2t_kH_kt_k-\frac{1}{3}\alpha^3\sigma_k\|t_k\|^3+O(\alpha^2\|d_k\|^2)\\
		&\overset{(G4)}{\leq}& \frac{1}{2}\alpha^2t_kH_kt_k+O(\alpha^2\|d_k\|^2)\\
		&=&O(\alpha^2(\frac{1}{\sqrt{\sigma_k}})^2).
	\end{eqnarray*}
	Hence, the \eqref{3.16a} holds.
\end{proof}
\begin{lemma}\label{3.77}
	Let Assumptions G, \eqref{38} and \eqref{3.10} hold, $k\notin\mathcal{A}_{\rm inc}$. Assume also that
	\begin{eqnarray}\label{3.15}
		\sigma_k\geq\Delta_2:={\max} \bigg\{\Delta_1, \left(\frac{24\sqrt{2}M_\ell}{(1-\eta_2)\epsilon\sqrt{\epsilon}}\right)^2\bigg\}.
	\end{eqnarray}
	Then $\rho_k\geq \eta_2$ for all $\alpha\in(0,1]$.
\end{lemma}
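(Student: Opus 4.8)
The plan is to show $\rho_k\ge\eta_2$ by bounding $1-\rho_k$ from above, exploiting the two preceding lemmas in tandem. First I would record that the hypothesis $\sigma_k\ge\Delta_2$ in \eqref{3.15} forces $\sigma_k\ge\Delta_1$, so that the assumptions of Lemma \ref{lemma3.5} are all in force (note that \eqref{38} and \eqref{3.10} are assumed, and $k\notin\mathcal{A}_{\rm inc}$). Consequently \eqref{3.11} gives $-m_k(\alpha)\ge\alpha\,\frac{\epsilon\sqrt{\epsilon}}{24\sqrt{2}}\,\sigma_k^{-1/2}>0$ for every $\alpha\in(0,1]$. In particular $m_k(\alpha)<0$, which both puts us in the first branch of Step~11 (so that $\rho_k$ is defined by \eqref{rhok}) and legitimises dividing the inequalities below by $m_k(\alpha)$ while tracking the sign reversal.

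Next I would rewrite the quantity of interest as
\begin{eqnarray*}
1-\rho_k=\frac{\ell(x_k+\alpha d_k)-\ell(x_k)-m_k(\alpha)}{-m_k(\alpha)}.
\end{eqnarray*}
The numerator is controlled from above by Lemma \ref{111111}, precisely inequality \eqref{3.16a}, which yields $\ell(x_k+\alpha d_k)-\ell(x_k)-m_k(\alpha)\le M_\ell\alpha^2\sigma_k^{-1}$; the denominator is controlled from below by the Lemma \ref{lemma3.5} bound just recalled. Dividing, and cancelling one factor of $\alpha$, gives
\begin{eqnarray*}
1-\rho_k\le\frac{M_\ell\alpha^2\sigma_k^{-1}}{\alpha\,\frac{\epsilon\sqrt{\epsilon}}{24\sqrt{2}}\,\sigma_k^{-1/2}}
=\frac{24\sqrt{2}\,M_\ell\,\alpha}{\epsilon\sqrt{\epsilon}\,\sqrt{\sigma_k}}
\le\frac{24\sqrt{2}\,M_\ell}{\epsilon\sqrt{\epsilon}\,\sqrt{\sigma_k}},
\end{eqnarray*}
where the last step uses $\alpha\le1$, so the resulting bound is uniform over $\alpha\in(0,1]$, exactly as the statement requires.

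Finally I would invoke the definition of $\Delta_2$ in \eqref{3.15}: from $\sigma_k\ge\bigl(24\sqrt{2}M_\ell/((1-\eta_2)\epsilon\sqrt{\epsilon})\bigr)^2$ it follows that $\sqrt{\sigma_k}\ge 24\sqrt{2}M_\ell/((1-\eta_2)\epsilon\sqrt{\epsilon})$, whence the right-hand side above is at most $1-\eta_2$. Therefore $1-\rho_k\le 1-\eta_2$, i.e.\ $\rho_k\ge\eta_2$, which is the claim. The step I expect to require the most care is not the algebra but the bookkeeping of signs and hypotheses: one must verify up front that $m_k(\alpha)<0$ (so that the division in $1-\rho_k$ preserves the intended inequality direction) and that $\sigma_k\ge\Delta_2$ genuinely supplies every threshold needed by both Lemma \ref{lemma3.5} (via $\Delta_1$) and the final comparison with $1-\eta_2$. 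Once these are in place the estimate is essentially a direct chaining of \eqref{3.16a} and \eqref{3.11}.
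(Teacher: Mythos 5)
Your argument is correct and follows essentially the same route as the paper's own proof: both apply Lemma \ref{lemma3.5} (via $\sigma_k\geq\Delta_2\geq\Delta_1$) to lower-bound $-m_k(\alpha)$, then use \eqref{3.16a} to upper-bound the numerator of $1-\rho_k$, and finally invoke the second term in the definition of $\Delta_2$ to conclude $1-\rho_k\leq 1-\eta_2$. Your additional care about the sign of $m_k(\alpha)$ and the well-definedness of $\rho_k$ is a small but welcome refinement over the paper's terser presentation.
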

\begin{proof}
	Calling upon \eqref{38}, \eqref{3.10} and \eqref{3.15}, it follows that Lemma \ref{lem3.4} and Lemma \ref{lemma3.5} hold. As a result, we can deduce that
	\begin{eqnarray*}
		-m_k(\alpha)\geq \alpha\frac{\epsilon\sqrt{\epsilon}}{24\sqrt{2}}\frac{1}{\sqrt{\sigma_k}}.
	\end{eqnarray*}
	Combining this result, \eqref{3.16a}, the definition of $\rho_k$ and \eqref{3.15}, one finds that
	\begin{eqnarray*}
		&&1-\rho_k\\
		&=&\frac{\ell(x_k+\alpha d_k)-\ell(x_k)-m_k(\alpha)}{-m_k(\alpha)}\\
		&\leq&\frac{24\sqrt{2}M_\ell\alpha^2}{\alpha\epsilon\sqrt{\epsilon}\sqrt{\sigma_k}}\\
		&\leq& 1-\eta_2.
	\end{eqnarray*}
	Therefore, the claim is true.
\end{proof}

\begin{lemma}\label{lemma3.8}
	Let Assumptions G hold and let $\{x_{k_{i}}\}$ be a sequence with $k_i\notin\mathcal{A}_{\rm inc}$. Assume furthermore that \eqref{3.11} follows for a constant $\epsilon>0$ independent of $k_i$ and for all $\alpha\in(0,1]$. Then there exists a constant
	\begin{eqnarray*}
		\bar{\alpha}=\frac{(1-\mu)\epsilon\sqrt{\epsilon}\sqrt{\sigma_{\min}}}{24\sqrt{2}M_\ell}>0
	\end{eqnarray*}
	such that
	\begin{eqnarray}\label{316}
		\ell(x_{k_i}+\alpha d_{k_i})-\ell(x_{k_i})\leq \mu m_{k_i}(\alpha)
	\end{eqnarray}
	for all $k_i$ and $\alpha\leq\bar{\alpha}$.
\end{lemma}
\begin{proof}
	Using \eqref{3.16a} in Lemma \ref{111111}, one finds that
	\begin{eqnarray*}
		&&\ell(x_{k_i}+\alpha d_{k_i})-\ell(x_{k_i})-m_{k_i}(\alpha)\\
		&\leq& M_\ell\alpha^2\sigma_{k_i}^{-1}\\
		&\leq& M_\ell\frac{(1-\mu)\epsilon\sqrt{\epsilon}\sqrt{\sigma_{\min}}}{24\sqrt{2}M_\ell}\alpha\sigma_{k_i}^{-1}\\
		&\leq& \frac{\alpha\epsilon\sqrt{\epsilon}}{24\sqrt{2}}(1-\mu)\sigma_{k_i}^{-\frac{1}{2}}\\
		&\overset{\eqref{3.11}}{\leq}&-(1-\mu)m_{k_i}(\alpha)
	\end{eqnarray*}
	for $\alpha\in(0,\bar{\alpha}]$, which indicates that \eqref{316} follows.
\end{proof}
\vspace{-0.5cm}
\begin{lemma}\label{3.9}
	Suppose that Assumptions G hold and let $\{x_{k_{i}}\}$ be a sequence with $k_i\notin\mathcal{A}_{\rm inc}$. Suppose that \eqref{3.11} holds for all $\alpha\in(0,1]$ and for a constant $\epsilon>0$ independent of $k_i$. Then there exist constants $\nu_1,\nu_2>0$ for which
	\begin{eqnarray*}
		(h(x_{k_i}+\alpha d_{k_i}),\ell(x_{k_i}+\alpha d_{k_i}))\notin \mathcal{F}_{k_i}
	\end{eqnarray*}
	for all $k_i$ and $\alpha\leq {\rm min}\{\nu_1,\nu_2h(x_{k_i})\}$.
\end{lemma}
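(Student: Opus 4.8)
The plan is to exploit that filter-acceptability of the trial point $x_{k_i}(\alpha):=x_{k_i}+\alpha d_{k_i}$ means precisely that, for every pair $(h(x_j),\ell(x_j))\in\mathcal{F}_{k_i}$, at least one of $h(x_{k_i}(\alpha))\leq(1-\gamma_h)h(x_j)$ or $\ell(x_{k_i}(\alpha))\leq\ell(x_j)-\gamma_\ell h(x_j)$ holds, cf.\ \eqref{2.17}. I would first show that along $d_{k_i}$ both $h$ and $\ell$ strictly decrease for all sufficiently small $\alpha$, and then combine these decreases with the fact that the current iterate $x_{k_i}$ is itself acceptable to $\mathcal{F}_{k_i}$ (a property preserved by the acceptance rules of the filter, following \cite{2005Line}): for each filter pair, whichever of the two alternatives holds for $x_{k_i}$ is then inherited, with room to spare, by $x_{k_i}(\alpha)$.

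For the infeasibility, I would start from \eqref{3.16b}, $h(x_{k_i}(\alpha))\leq(1-\alpha)h(x_{k_i})+M_h\alpha^2\|d_{k_i}\|^2$, and use the uniform bound $\|d_{k_i}\|\leq D:=(\sqrt{3M_{pg}}+\beta_1)\sigma_{\min}^{-1/2}$ coming from \eqref{dkjie}. Setting $\nu_2:=\gamma_h/(2M_hD^2)$, for $\alpha\leq\nu_2 h(x_{k_i})$ the quadratic term is bounded by $\tfrac{\gamma_h}{2}\alpha h(x_{k_i})$, so that $h(x_{k_i}(\alpha))\leq(1-\tfrac{\alpha}{2})h(x_{k_i})<h(x_{k_i})$ whenever $h(x_{k_i})>0$. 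If $h(x_{k_i})=0$ the admissible range $\alpha\leq\nu_2 h(x_{k_i})$ is empty and there is nothing to prove, which is exactly why the threshold must scale with $h(x_{k_i})$.

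For the Lagrangian, the hypothesis that \eqref{3.11} holds gives $m_{k_i}(\alpha)<0$ for every $\alpha\in(0,1]$, and since \eqref{3.11} holds, Lemma \ref{lemma3.8} applies and yields $\ell(x_{k_i}(\alpha))-\ell(x_{k_i})\leq\mu m_{k_i}(\alpha)<0$ for all $\alpha\leq\bar{\alpha}$. I would therefore set $\nu_1:=\min\{\bar{\alpha},1\}$, so that $\ell(x_{k_i}(\alpha))<\ell(x_{k_i})$ for $\alpha\leq\nu_1$.

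Finally, fix a pair $(h(x_j),\ell(x_j))\in\mathcal{F}_{k_i}$ and use acceptability of $x_{k_i}$: if $\ell(x_{k_i})\leq\ell(x_j)-\gamma_\ell h(x_j)$, the strict $\ell$-decrease gives $\ell(x_{k_i}(\alpha))<\ell(x_{k_i})\leq\ell(x_j)-\gamma_\ell h(x_j)$; if instead $h(x_{k_i})\leq(1-\gamma_h)h(x_j)$, the strict $h$-decrease gives $h(x_{k_i}(\alpha))<h(x_{k_i})\leq(1-\gamma_h)h(x_j)$. In either case the trial pair is acceptable to $(h(x_j),\ell(x_j))$, and since the pair was arbitrary, $(h(x_{k_i}(\alpha)),\ell(x_{k_i}(\alpha)))\notin\mathcal{F}_{k_i}$ for all $\alpha\leq\min\{\nu_1,\nu_2 h(x_{k_i})\}$. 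I expect the main obstacle to be the justification that the current iterate is acceptable to its own filter — in particular that augmenting the filter in an $h$-type step (Step 9) never cuts off the just-accepted point — together with keeping the threshold $\nu_2$ independent of $k_i$, which is precisely what forces the $\nu_2 h(x_{k_i})$ scaling needed to dominate the quadratic remainder in \eqref{3.16b}.
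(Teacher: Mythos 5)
Your proposal is correct and follows essentially the same route as the paper: establish $(h(x_{k_i}),\ell(x_{k_i}))\notin\mathcal{F}_{k_i}$ from the algorithm's mechanism, derive the $\ell$-decrease from \eqref{3.16a} together with \eqref{3.11} (your detour through Lemma \ref{lemma3.8} is just this combination packaged, giving $\nu_1=\bar{\alpha}$ instead of the paper's $\frac{\epsilon\sqrt{\epsilon}\sqrt{\sigma_{\min}}}{24\sqrt{2}M_\ell}$), derive the $h$-decrease from \eqref{3.16b} with the uniform step bound from \eqref{dkjie} (forcing the $\nu_2 h(x_{k_i})$ scaling, up to your harmless extra factor $\gamma_h/2$), and conclude by monotonicity of both filter measures. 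The point you flag as the main obstacle is exactly what the paper dispatches with \eqref{3.17}, so no gap remains.
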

\begin{proof}
	From the mechanism of Algorithm 2.1, we know that
	\begin{eqnarray}\label{3.17}
		(h(x_{k_i}),\ell(x_{k_i}))\notin \mathcal{F}_{k_i}.
	\end{eqnarray}
	Using \eqref{3.16a} and \eqref{3.11}, it follows that
	\begin{eqnarray*}
		&&\ell(x_{k_i}+\alpha d_{k_i})-\ell(x_{k_i})\\
		&\leq& m_{k_i}(\alpha)+M_\ell\alpha^2\sigma_{k_i}^{-1}\\
		&\leq&-\frac{\alpha\epsilon\sqrt{\epsilon}}{24\sqrt{2}}\sigma_{k_i}^{-\frac{1}{2}}+M_\ell\alpha^2\sigma_{k_i}^{-1}.
	\end{eqnarray*}
	Note that for $\alpha\leq \nu_1:=\frac{\epsilon\sqrt{\epsilon}\sqrt{\sigma_{\min}}}{24\sqrt{2}M_\ell}$, we can get that
	\begin{eqnarray}\label{3.18}
		\ell(x_{k_i}+\alpha d_{k_i})\leq \ell(x_{k_i}).
	\end{eqnarray}
	Similarly, it follows from \eqref{3.16b} that
	\begin{eqnarray}\label{3.19}
		h(x_{k_i}+\alpha d_{k_i})\leq h(x_{k_i})
	\end{eqnarray}
	for $\alpha\leq \nu_2h(x_{k_i})$, where $\nu_2:=\frac{\sigma_{\min}}{M_h(\sqrt{3M_{pg}}+\beta_1)^2}.$
\end{proof}

Combining \eqref{3.17}-\eqref{3.19} and the initialization of the filter, we can deduce that $(h(x_{k_i}+\alpha d_{k_i}),\ell(x_{k_i}+\alpha d_{k_i}))\notin \mathcal{F}_{k_i}$.
\begin{lemma}\label{lemma3.10}
	Suppose that Assumptions G hold and $\vert\mathcal{A}\vert<\infty$. Assume also that \eqref{38} follows for all $k$. Then there exists a constant $\sigma_{\max}>0$ independent of $k$ so that $\sigma_k\leq \sigma_{\max}$ for all k.
\end{lemma}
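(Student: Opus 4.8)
The plan is to exploit the familiar adaptive-regularisation dichotomy: once $\sigma_k$ grows past a fixed threshold, the agreement ratio $\rho_k$ is forced to satisfy $\rho_k\geq\eta_2$, after which the update rule in Step 11 can only leave $\sigma_k$ unchanged or shrink it. Consequently $\sigma_k$ can never exceed a fixed multiple of that threshold. The threshold is exactly $\Delta_2$ from \eqref{3.15}; since $\epsilon$, $M_H$, $M_\ell$ and $\eta_2$ are all constants independent of $k$ (with $\|P_kg_k\|\geq\epsilon$ holding uniformly by \eqref{38}), $\Delta_2$ is a fixed $k$-independent number, as is $\Delta_1$ from \eqref{fm}.

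The first task is to check that the hypotheses of Lemma~\ref{3.77} hold for all sufficiently large $k$. Because $|\mathcal{A}|<\infty$, Lemma~\ref{lem3.1} gives $h(x_k)\to 0$, so there is $K_1$ such that the smallness condition \eqref{3.10} holds for every $k\geq K_1$. Moreover $\mathcal{A}_{\rm inc}\subseteq\mathcal{A}$ is finite, so there is $K_2$ with $k\notin\mathcal{A}_{\rm inc}$ for all $k\geq K_2$; for such $k$ the iteration does not enter the restoration branch and $\sigma_{k+1}$ is genuinely produced by Step 11. Set $K:=\max\{K_1,K_2\}$, so that for every $k\geq K$ the conditions Assumptions G, \eqref{38}, \eqref{3.10} and $k\notin\mathcal{A}_{\rm inc}$ are all in force, and Lemma~\ref{3.77} applies whenever additionally $\sigma_k\geq\Delta_2$.

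With these facts in hand I would run a short induction. Define $\sigma_{\max}:=\max\{\sigma_0,\dots,\sigma_K,\gamma_2\Delta_2\}$, a finite number, and claim $\sigma_k\leq\sigma_{\max}$ for all $k$. The cases $k\leq K$ hold by construction. For the inductive step, suppose $k\geq K$ and $\sigma_k\leq\sigma_{\max}$. If $\sigma_k<\Delta_2$, then the update rule increases $\sigma$ by a factor of at most $\gamma_2$, so $\sigma_{k+1}\leq\gamma_2\sigma_k<\gamma_2\Delta_2\leq\sigma_{\max}$. If instead $\sigma_k\geq\Delta_2\geq\Delta_1$, then—using the Cauchy decrease \eqref{lem1} of Lemma~\ref{lem3.4} together with \eqref{38}, \eqref{fm} and \eqref{3.10}—Lemma~\ref{lemma3.5} guarantees $-m_k(\alpha_k)>0$, so the algorithm enters the branch of Step 11 that evaluates $\rho_k$ via \eqref{rhok}; Lemma~\ref{3.77} then yields $\rho_k\geq\eta_2$, whence $\sigma_{k+1}\in(0,\sigma_k]$ and $\sigma_{k+1}\leq\sigma_k\leq\sigma_{\max}$. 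In either case $\sigma_{k+1}\leq\sigma_{\max}$, completing the induction and the proof.

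The main obstacle is not the boundedness bookkeeping, which is routine, but ensuring that all the hypotheses needed to invoke Lemma~\ref{3.77} are available simultaneously for large $k$: the smallness of $h(x_k)$ required by \eqref{3.10} must be recovered from $|\mathcal{A}|<\infty$ through Lemma~\ref{lem3.1}, and one must verify—via Lemma~\ref{lemma3.5}—that the large-$\sigma_k$ iterations actually fall into the $\rho_k$-branch of Step 11 rather than the ``otherwise'' branch, since it is only in that branch that $\rho_k\geq\eta_2$ can block further growth of $\sigma_k$.
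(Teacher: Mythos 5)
Your argument is correct and is essentially the paper's own proof: both rest on showing that once $\sigma_k\geq\Delta_2$ (for $k$ large enough that $h(x_k)$ is small and $k\notin\mathcal{A}_{\rm inc}$), Lemmas \ref{lemma3.5} and \ref{3.77} force the $\rho_k$-branch of Step 11 with $\rho_k\geq\eta_2$, so $\sigma_{k+1}\leq\sigma_k$, while below the threshold growth is capped by the factor $\gamma_2$. You package this as a direct induction with bound $\max\{\sigma_0,\dots,\sigma_K,\gamma_2\Delta_2\}$ whereas the paper argues by contradiction on the first index exceeding $\gamma_2\max\{\Delta_2,\sigma_{K_1}\}$, a purely cosmetic difference.
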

\begin{proof}
	Due to $\vert\mathcal{A}\vert<\infty$, Lemma \ref{lem3.1} indicates that \eqref{eq3.4} follows. Let $K_1\geq K_0$ be given, which is sufficiently large enough so that $k\notin \mathcal{A}_{\rm inc}$ follows for all $k\geq K_1$.
	
	To obtain a contradiction, we assume that the iteration $j$ is the first iteration after $K_1$ such that
	\begin{eqnarray}\label{eq3.20}
		\sigma_j\geq \gamma_2\Delta_3
	\end{eqnarray}
	with $\Delta_3:= {\max} \left\{\Delta_2, \sigma_{K_1}\right\}$.  \eqref{eq3.20} implies that $\sigma_j\geq\gamma_2\sigma_{K_1}$. This result guarantees that $j\geq K_1+1$. Hence, one finds that $j-1\geq K_1$ and we can deduce that $j-1\notin \mathcal{A}_{\rm inc}$. Calling upon step 11 of Algorithm 2.1, it follows from \eqref{eq3.20} that
	\begin{eqnarray*}
		\sigma_{j-1}\geq \Delta_3\geq \Delta_2.
	\end{eqnarray*}
	Thus, the result follows from Lemma \ref{3.77}.
	
	Moreover, it follows from $\Delta_2\geq\Delta_1$, \eqref{3.10} and \eqref{38} that Lemma \ref{lemma3.5} is applicable. Therefore, we can deduce that Lemma \ref{3.9} is applicable, which indicates that $x_{j-1}+\alpha d_{j-1}$ is accepted by the filter, for all $\alpha\leq \min \left\{\nu_1, \nu_2h(x_{j-1}) \right\}$. It follows from this result, $\rho_{j-1}\geq\eta_2$ and the mechanism of the Algorithm 2.1 that
	\begin{eqnarray*}
		\sigma_{j-1}\geq\sigma_j\geq \gamma_2 {\max} \left\{\Delta_2, \sigma_{K_1}\right\},
	\end{eqnarray*}
	which contradicts the fact that $j$ is the first iteration after $K_1$ such that \eqref{eq3.20} follows. Hence, for all $k\geq K_1$, one finds that $\sigma_k\leq\gamma_2\Delta_3$. If we define
	\begin{eqnarray*}
		\sigma_{\max}=\max \left\{ \sigma_0,\dots,\sigma_{K_1},\gamma_2\Delta_3 \right\},
	\end{eqnarray*}
	the desired conclusion follows.
\end{proof}
\vspace{-0.5cm}
\begin{lemma}\label{lemma3.11}
Suppose that Assumptions G hold. Then
	\begin{eqnarray}\label{3.21}
		h(x_k)=0\Rightarrow -m_k(\alpha)>0,
	\end{eqnarray}
	and for all $k$ and $\alpha\in(0,1]$,
	\begin{eqnarray}\label{3.22}
		\mathcal{H}_k:={\rm min}\{h:(h,\ell)\in \mathcal{F}_k\}>0.
	\end{eqnarray}
\end{lemma}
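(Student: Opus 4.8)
The plan is to establish the two assertions in order, as the second is a direct consequence of the first combined with the acceptance logic of Algorithm 2.1.

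For \eqref{3.21} I would begin from $h(x_k)=0$, i.e.\ $c(x_k)=0$. By \eqref{qjn} this makes the normal step vanish, $n_k=0$, so $d_k=t_k$, and substituting $c(x_k)=0$ into the model \eqref{eq2.12} removes its final term, leaving $-m_k(\alpha)=-\alpha g_k^Tt_k-\tfrac12\alpha^2 t_k^TH_kt_k-\tfrac13\alpha^3\sigma_k\|t_k\|^3$. Because $t_k=P_ku_k$ lies in the null space of $A_k$, assumption (G4) gives $t_k^TH_kt_k\ge 0$, so for $\alpha\in(0,1]$ the bounds $\alpha^2\le\alpha$ and $\alpha^3\le\alpha$ yield $-m_k(\alpha)\ge\alpha\bigl(-g_k^Tt_k-\tfrac12 t_k^TH_kt_k-\tfrac13\sigma_k\|t_k\|^3\bigr)$. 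Recognising the right-hand bracket as $f(x_k)-m_k^t(t_k)$ via \eqref{99} (and $g_k^Tt_k=(P_kg_k)^Tt_k$), I would then invoke Lemma \ref{lem3.4}, which is available since $h(x_k)=0\le\delta_h$ forces $k\notin\mathcal{A}_{\rm inc}$ through (G5). Finally, since Assumptions G rule out stopping at a KKT point, $c(x_k)=0$ together with the characterisation \eqref{KKT} forces $\|P_kg_k\|>0$; hence the Lemma \ref{lem3.4} bound is strictly positive and $-m_k(\alpha)>0$ for all $\alpha\in(0,1]$.

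For \eqref{3.22} I would describe the filter at iteration $k$ explicitly: it consists of the initial region $\{h\ge h_{\max}\}$ together with the finitely many entries $(h(x_j),\ell(x_j))$ appended at indices $j\in\mathcal{A}$ with $j<k$. The key observation is that the filter is enlarged only in Step 9, i.e.\ precisely when $k\in\mathcal{A}$, which occurs only after the switching condition \eqref{999} has failed. I would then show that a feasible iterate can never be appended: if $h(x_k)=0$, then \eqref{3.21} gives $m_k(\alpha_k)<0$ (the first half of \eqref{999}), while the second half reduces to $(-m_k(\alpha_k))^{\omega}(\alpha_k\sqrt{\sigma_k})^{\omega-1}>0=\kappa_h(h(x_k))^{\varsigma}$, which holds automatically since the left side is strictly positive. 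Contrapositively, every $k\in\mathcal{A}$ has $h(x_k)>0$, so each appended entry carries a strictly positive infeasibility. Since only finitely many entries sit above the initial region, $\mathcal{H}_k$ is the minimum of $h_{\max}$ and finitely many positive numbers, hence $\mathcal{H}_k>0$.

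The algebraic reduction of $m_k(\alpha)$ and the filter bookkeeping are routine. The step demanding care is the interface between \eqref{3.21} and the algorithmic logic of Steps 7--9: the entire argument for \eqref{3.22} rests on the fact that a feasible point forces \eqref{999}, so that no feasible iterate is ever placed in the filter. This linkage, rather than any computation, is the substantive content of the lemma.
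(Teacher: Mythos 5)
Your proposal is correct and follows essentially the same route as the paper: for \eqref{3.21} it uses $c(x_k)=0\Rightarrow n_k=0$, the vanishing of the cross term in \eqref{eq2.12}, assumption (G4) with $\alpha\in(0,1]$ to factor out $\alpha$, and the Cauchy-decrease bound of Lemma \ref{lem3.4} together with $\|P_kg_k\|>0$; for \eqref{3.22} it uses the same key observation that a feasible iterate always satisfies the switching condition \eqref{999} and hence is never added to the filter. The only cosmetic difference is that the paper organizes the second part as an induction on $k$ (noting the $(1-\gamma_h)$ margin when an infeasible iterate is appended), whereas you argue directly over the finitely many positive filter entries; both are sound.
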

\begin{proof}
	From \eqref{qjn}, one finds that
	\begin{eqnarray}\label{3.23}
		&&\|n_k\|\nonumber\\
		&=&\|A_k^T(A_kA_k^T)^{-1}c(x_k)\|\nonumber\\
		&\leq&\|A_k^T(A_kA_k^T)^{-1}\|h(x_k)\nonumber\\
		&\overset{(G6)}{\leq}&\frac{1}{M_A}h(x_k).
	\end{eqnarray}
	Hence, using \eqref{3.23}, $h(x_k)=0$ yields that $n_k=0$ and $c(x_k)=0$, which indicates that $d_k=t_k$.
	
	Moreover, we know that $\|P_kg_k\|>0$ or the algorithm would stop in step 2. It follows from (G4) that
	\begin{eqnarray*}
		&&-m_k(\alpha)\\
		&=&-\alpha g_k^Tt_k-\frac{1}{2}\alpha^2t_k^TH_kt_k-\frac{1}{3}\alpha^3\sigma_k\|t_k\|^3+\alpha(\nabla\lambda_k^Td_k)^Tc(x_k)\\
		&=&-\alpha g_k^Tt_k-\frac{1}{2}\alpha^2t_k^TH_kt_k-\frac{1}{3}\alpha^3\sigma_k\|t_k\|^3\\
		&\overset{0<\alpha<1}{\geq}&\alpha\bigg(-g_k^Tt_k-\frac{1}{2}t_k^TH_kt_k-\frac{1}{3}\sigma_k\|t_k\|^3\bigg)\\
		&\overset{\eqref{lem1}}{\geq}&\frac{\alpha\|P_kg_{k}\|}{6\sqrt{2}}{\rm min}\bigg\{\frac{\|P_kg_{k}\|}{1+\|H_{k}\|}, \frac{1}{2}\sqrt{\frac{\|P_kg_{k}\|}{\sigma_{k}}}\bigg\}>0.
	\end{eqnarray*}
	Hence, \eqref{3.21} follows.
	
	Next, we establish the second conclusion. Since $h_{\rm max}>0$, for $k=0$, calling upon the mechanism of Algorithm 2.1, one can show that the claim follows.
	
	Let the claim holds for $k$. Provided that $h(x_k)>0$, and $x_k$ is accepted by the filter, we can get that $\mathcal{H}_{k+1}>0$ in view of $\gamma_h\in(0,1)$.
	
	If $h(x_k)=0$, it follows from \eqref{3.21} that $-m_k(\alpha)>0$ for all $\alpha\in(0,1]$. Hence, for all trial step sizes, we have that \eqref{999} holds. Thus, our algorithm always consider step 8. Furthermore, $\alpha_k$ satisfies \eqref{lang}. Hence, $x_k$ is not accepted by the filter, which shows that $\mathcal{H}_{k+1}=\mathcal{H}_{k}>0$, as announced.
\end{proof}
\vspace{-0.5cm}
\begin{lemma}\label{lemma3.12}
	Suppose that Assumptions G hold. Suppose also that $\vert\mathcal{A}\vert<\infty$. Then
	\begin{eqnarray*}
		{\underset {k\rightarrow\infty}{\rm lim}}~ \|P_kg_k\|=0.
	\end{eqnarray*}
\end{lemma}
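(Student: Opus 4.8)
\emph{Proof plan.} I would argue by contradiction, assuming $\|P_kg_k\|\not\to 0$, and organise everything around the quantity $-m_k(\alpha_k)$. First, since $\vert\mathcal{A}\vert<\infty$, Lemma \ref{lem3.1} gives $h(x_k)\to 0$ and provides an index $K_0$ beyond which no iterate enters the filter; hence for every $k>K_0$ the accepted step size $\alpha_k$ is produced through Step 8, so both the switching condition \eqref{999} and the descent condition \eqref{lang} hold, with $m_k(\alpha_k)<0$. Because $\ell(x_k)$ is bounded below by (G1) and, by \eqref{lang}, monotonically decreasing for $k>K_0$, it converges; summing \eqref{lang} then yields $\sum_{k>K_0}\bigl(-m_k(\alpha_k)\bigr)<\infty$, so in particular $-m_k(\alpha_k)\to 0$. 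The whole proof reduces to contradicting this last fact.

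Suppose then that $\|P_kg_k\|\ge\epsilon$ for some fixed $\epsilon>0$ and all large $k$. Lemma \ref{lemma3.10} applies and gives a uniform bound $\sigma_k\le\sigma_{\max}$, while the algorithm keeps $\sigma_k\ge\sigma_{\min}$. Since $h(x_k)\to 0$, condition \eqref{3.10} holds for $k$ large, and as $\sigma_k$ now lies in the fixed interval $[\sigma_{\min},\sigma_{\max}]$, Lemma \ref{lemma3.5} (or, where \eqref{fm} fails, the raw Cauchy estimate of Lemma \ref{lem3.4}) furnishes a constant $c_m>0$ with $-m_k(\alpha)\ge c_m\,\alpha$ for all $\alpha\in(0,1]$ and all large $k$. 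It therefore remains only to bound $\alpha_k$ away from $0$.

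The step-size bound is the main obstacle, because the naive filter window of Lemma \ref{3.9} has width $\nu_2 h(x_k)\to 0$. I would instead exploit monotonicity in $\alpha$ near $0$. Let $\alpha_{k,l-1}\in(\alpha_k,\alpha_k/\omega_1]$ be the trial rejected just before the accepted $\alpha_k$; if $\alpha_k\to 0$ then $\alpha_{k,l-1}\to 0$ as well. For small $\alpha$ the maps $\alpha\mapsto h(x_k+\alpha d_k)$ and $\alpha\mapsto\ell(x_k+\alpha d_k)$ are decreasing (by the estimates of Lemmas \ref{111111} and \ref{lemma3.5}), so any filter-acceptable $\alpha$ remains acceptable as $\alpha$ grows; seeding this with Lemma \ref{3.9} shows the larger trial $\alpha_{k,l-1}$ is filter-acceptable, ruling out a Step 6 rejection. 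Likewise the switching quantity $(-m_k(\alpha))^{\omega}(\alpha\sqrt{\sigma_k})^{\omega-1}$ is increasing for small $\alpha$, so \eqref{999}, valid at $\alpha_k$, also holds at $\alpha_{k,l-1}$; and by Lemma \ref{lemma3.8} the descent test \eqref{lang} holds for all $\alpha\le\bar\alpha$. Hence once $\alpha_{k,l-1}\le\bar\alpha$ the previous trial would itself have been accepted at Step 8 --- a contradiction. This forces $\alpha_k\ge\alpha_\ast:=\omega_1\min\{1,\bar\alpha\}>0$, whence $-m_k(\alpha_k)\ge c_m\alpha_\ast>0$, contradicting $-m_k(\alpha_k)\to 0$.

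The previous two paragraphs rule out $\|P_kg_k\|\ge\epsilon$ holding for all large $k$, i.e. $\liminf_k\|P_kg_k\|=0$. To upgrade this to the full limit I would run the standard two-subsequence argument: if $\limsup_k\|P_kg_k\|=2\epsilon>0$ while $\liminf_k\|P_kg_k\|=0$, the index set splits into infinitely many windows across which $\|P_kg_k\|$ falls from $\ge 2\epsilon$ to $<\epsilon$, each containing an iterate with $\|P_kg_k\|\ge\epsilon$ and hence, by the model and step bounds above, contributing a decrease in $\ell$ bounded below by a fixed positive constant; infinitely many such decreases contradict the convergence of $\ell(x_k)$. The delicate point here is keeping $\sigma_k$ bounded along the windows where $\|P_kg_k\|$ may dip below $\epsilon$; this is handled by combining Lemma \ref{lemma3.10} with the fact that Lemma \ref{3.77} forces $\rho_k\ge\eta_2$, and hence no increase of $\sigma_k$, whenever $\sigma_k$ is large and $\|P_kg_k\|\ge\epsilon$.
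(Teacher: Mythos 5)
Your overall skeleton coincides with the paper's: argue by contradiction, use $\vert\mathcal{A}\vert<\infty$ to conclude that \eqref{999} and \eqref{lang} hold for all large $k$ and hence $-m_k(\alpha_k)\to0$; combine the Cauchy decrease of Lemma \ref{lem3.4} with the bound $\sigma_k\le\sigma_{\max}$ of Lemma \ref{lemma3.10} and $h(x_k)\to0$ to get $-m_k(\alpha)\ge c_m\alpha$; deduce that the accepted step sizes tend to zero; and then show that the last rejected trial step size would in fact have been accepted. The genuine gap is in the filter part of that last step. You assert that $\alpha\mapsto h(x_k+\alpha d_k)$ and $\alpha\mapsto\ell(x_k+\alpha d_k)$ are decreasing for small $\alpha$ and that filter acceptability therefore propagates from small $\alpha$ (where Lemma \ref{3.9} applies) up to the rejected trial $\alpha_{k,l-1}$. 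Lemmas \ref{111111} and \ref{lemma3.5} do not give monotonicity: \eqref{3.16b} only bounds $h(x_k+\alpha d_k)$ from above by a quadratic in $\alpha$ relative to $h(x_k)$ and says nothing about the comparison of $h$ at two different positive step sizes, so ``acceptable at $\alpha'$ implies acceptable at $\alpha>\alpha'$'' is unsupported; moreover the seed itself is problematic, since the acceptance window of Lemma \ref{3.9} shrinks like $\nu_2h(x_k)\to0$ while $\alpha_{k,l-1}$ need not lie in it. The ingredient you are missing is Lemma \ref{lemma3.11}: because the filter is no longer augmented after some $K_2$, the threshold $\mathcal{H}_{K_2}={\rm min}\{h:(h,\ell)\in\mathcal{F}_{K_2}\}$ is a fixed positive number, and \eqref{3.16b} together with $h(x_k)\to0$, $\alpha_{k,l-1}\to0$ and the bound \eqref{dkjie} on $\|d_k\|$ gives $h(x_k+\alpha_{k,l-1}d_k)<\mathcal{H}_{K_2}$ eventually, so the rejected trial cannot lie in the (now fixed) filter. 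That is how the paper closes this case, and it cannot be replaced by the monotonicity argument you propose.

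A secondary remark: you first prove only $\liminf_k\|P_kg_k\|=0$ and then try to upgrade via a two-subsequence argument, whereas the paper assumes from the outset a subsequence with $\|P_{k_i}g_{k_i}\|>\epsilon$ and runs the entire contradiction along it, so no upgrade step is needed. Your concern about keeping $\sigma_k$ bounded on the windows where $\|P_kg_k\|$ dips below $\epsilon$ does identify a real delicacy (Lemma \ref{lemma3.10} is stated under \eqref{38} holding for all $k$), but you leave that control as a sketch, so as written this final part is not yet a proof.
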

\begin{proof}
	To derive a contradiction, we assume that there exists a subsequence $\{x_{k_i}\}$ so that \eqref{38} follows, namely, $\|P_{k_i}g_{k_i}\|>\epsilon$ for all $i$.
	
	Due to $\vert\mathcal{A}\vert<\infty$, for all $k_i\geq K_2$, there exists an integer $K_2\geq K_1\geq K_0$ so that $k_i\notin \mathcal{A}$. From the mechanism of Algorithm 2.1, one finds that \eqref{lang} and \eqref{999} follow for all  $k_i\geq K_2$. Consequently, the above results indicate that
		\begin{eqnarray}\label{3.25}
			{\underset {i\rightarrow\infty}{\rm lim}}~ m_{k_i}(\alpha_{k_i})=0
		\end{eqnarray}
	because $\ell(x_{k_i})$ is monotonically decreasing and bounded below from \eqref{eq3.6}.
	
	Combining \eqref{lem1}, Lemma \ref{lemma3.10}, and $\|P_kg_k\|>\epsilon$, it follows that
		\begin{eqnarray}\label{3.26}
			&&-g_{k_i}^Tt_{k_i}-\frac{1}{2}t_{k_i}^TH_{k_i}t_{k_i}-\frac{1}{3}\sigma_k\|t_{k_i}\|^3\nonumber\\
			&\geq &\frac{\|P_{k_i}g_{k_i}\|}{6\sqrt{2}}{\rm min}\left\{\frac{\|P_{k_i}g_{k_i}\|}{1+\|H_{k_i}\|}, \frac{1}{2}\sqrt{\frac{\|P_{k_i}g_{k_i}\|}{\sigma_{k_i}}}\right\}\nonumber\\
			&\geq&\tilde{\Delta},
		\end{eqnarray}
	where $$\tilde{\Delta}:=\frac{\epsilon}{6\sqrt{2}}{\rm min}\left\{\frac{\epsilon}{1+M_H}, \frac{\sqrt{\epsilon}}{2}\frac{1}{\sqrt{\sigma_{\max}}}\right\}.$$

	Then, we can get that
		\begin{eqnarray*}
			&&-m_{k_i}(\alpha_{k_i})-\alpha_{k_i}(\nabla\lambda_{k_i}^Td_{k_i})^Tc_{k_i}\\
			&=&-\alpha_{k_i}g_{k_i}^Tt_{k_i}-\frac{1}{2}\alpha_{k_i}^2t_{k_i}^TH_{k_i}t_{k_i}
			-\frac{1}{3}\alpha_{k_i}^3\sigma_{k_i}\|t_{k_i}\|^3\\
			&\geq&\alpha_{k_i}\bigg(-g_{k_i}^Tt_{k_i}-\frac{1}{2}t_{k_i}^TH_{k_i}t_{k_i}-\frac{1}{3}\sigma_{k_i}\|t_{k_i}\|^3\bigg)\\
			&\overset{\eqref{3.26}}{\geq}&\tilde{\Delta}\alpha_{k_i}
		\end{eqnarray*}
	for $k_i\geq K_2$. As a consequence, it follows that
	\begin{eqnarray}\label{3.28}
		-m_{k_i}(\alpha_{k_i})\geq(\tilde{\Delta}+(\nabla\lambda_{k_i}^Td_{k_i})^Tc_{k_i})\alpha_{k_i}.
	\end{eqnarray}
	Meanwhile, Lemma \ref{lem3.1} provides that there exists an integer $K_3\geq K_2$ so that $h(x_{k_i})\leq \frac{\tilde{\Delta}\sqrt{\sigma_{\min}}}{2(\sqrt{3M_{pg}}+\beta_1)M_{\lambda}^{'}}$ holds for all $k_i\geq K_3$. This result and \eqref{3.28} yield that
	\begin{eqnarray}\label{3.29}
		&&-m_{k_i}(\alpha_{k_i})\nonumber\\
		&\geq&[\tilde{\Delta}-(\nabla\lambda_{k_i}^Td_{k_i})^Tc_{k_i}]\alpha_{k_i}\nonumber\\
		&\geq&[\tilde{\Delta}-\|\nabla\lambda_{k_i}\|\|d_{k_i}\|h(x_{k_i})]\alpha_{k_i}\nonumber\\
		&\geq&\frac{1}{2}\tilde{\Delta}\alpha_{k_i}
	\end{eqnarray}
	for $k_i\geq K_3$. The last inequality and \eqref{3.25} show that ${\rm lim}_{i\rightarrow\infty}\alpha_{k_i}=0$.
	
	In general, suppose that $K_3$ is large enough so that $\alpha_{k_i}<1$. Hence, $\alpha_{k_i,0}=1$ cannot be accepted. Moreover, it follows from $k_i\notin\mathcal{A}$ and $\alpha_{k_{i},l_i}>\alpha_{k_i}$ that last rejected trial step size
	\begin{eqnarray}\label{3.30}
		\alpha_{k_i,l_i}\in[\alpha_{k_i}/\omega_2,\alpha_{k_i}/\omega_1]
	\end{eqnarray}
	satisfies \eqref{999}. As a consequence, $\alpha_{k_i,l_i}$ can not be accepted since \eqref{lang} does not hold, namely,
	\begin{eqnarray}\label{3.31}
		\ell(x_{k_i}+\alpha_{k_i,l_i}d_{k_i})-\ell(x_{k_i})>\mu[m_{k_i}(\alpha_{k_i,l_i})],
	\end{eqnarray}
	or it is not accepted by the current filter, namely,
	\begin{eqnarray}\label{3.32}
		(h(x_{k_i}+\alpha_{k_i,l_i}d_{k_i}),\ell(x_{k_i}+\alpha_{k_i,l_i}d_{k_i}))\in \mathcal{F}_{k_i}=\mathcal{F}_{K_2}.
	\end{eqnarray}
	Now, one provides that either \eqref{3.31} does not hold or $(h(x_{k_i}+\alpha_{k_i,l_i}d_{k_i}),\ell(x_{k_i}+\alpha_{k_i,l_i}d_{k_i}))\notin \mathcal{F}_{k_i}$ for sufficiently large $k_i$.
	
	Consider \eqref{3.31}. Combining \eqref{3.30} and ${\lim}_{i\to \infty}\alpha_{k,i}=0$, we can get that ${\lim}_{i\rightarrow\infty}\alpha_{k_i,l_i}=0$. Consequently, it follows from Lemma \ref{lemma3.8} that $\alpha_{k_i,l_i}\leq \bar{\alpha}$ for sufficiently large $k_i$, which indicates that \eqref{3.31} does not hold for those $k_i$.
	
	Consider \eqref{3.32}. Denote $\mathcal{H}_{K_2}={ \min}\{h:(h,\ell)\in \mathcal{F}_{K_2}\}$. Lemma \ref{lemma3.11} shows that $\mathcal{H}_{K_2}>0$. Our assumptions together \eqref{3.16b} imply that
	\begin{eqnarray*}
		h(x_{k_i}+\alpha_{k_i,l_i}d_{k_i})\leq(1-\alpha_{k_i,l_i})h(x_{k_i})+M_{h}\alpha_{k_i,l_i}^2(\sqrt{3M_{pg}}+\beta_1)^2\sigma_{\min}^{-1}.
	\end{eqnarray*}
	It follows from the above inequality, ${\rm lim}_{i\rightarrow\infty}\alpha_{k_i,l_i}=0$ and ${\rm lim}_{i\rightarrow\infty}h(x_{k_i})=0$ that $h(x_{k_i}+\alpha_{k_i,l_i}d_{k_i})<\mathcal{H}_{K_2}$ for $k_i$ sufficiently large. which is contradiction with \eqref{3.32}. Thus, the desired conclusion follows.
\end{proof}

Then, we can get the following result from Lemma 3.1 in \cite{Fletcher2002GLOBAL}.
\begin{lemma}\label{lemma3.13}
	Suppose that (G1) and \eqref{3.1} hold. Then
	\begin{eqnarray}\label{3.33}
		\|n_k\|\geq\frac{1}{\kappa_{hn}}h(x_k)
	\end{eqnarray}
	for a constant $\kappa_{hn}$ independent of $k$.
\end{lemma}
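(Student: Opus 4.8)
The plan is to convert the explicit formula \eqref{qjn} for the normal step into an identity relating $A_kn_k$ and $c(x_k)$, and then read off the lower bound on $\|n_k\|$ by bounding $\|A_k\|$ uniformly from above. First I would note that in the regime where the claim is actually used, $n_k$ is genuinely given by \eqref{qjn}: condition \eqref{3.1} guarantees $k\notin\mathcal{A}_{\rm inc}$ whenever $h(x_k)\le\delta_h$, so $A_k$ has full row rank and $n_k$ is the composite-step normal step rather than a restoration step. Left-multiplying \eqref{qjn} by $A_k$ and using invertibility of $A_kA_k^T$ gives
\begin{eqnarray*}
 A_kn_k=-A_kA_k^T(A_kA_k^T)^{-1}c(x_k)=-c(x_k).
\end{eqnarray*}

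Next I would take norms and invoke the compatibility of the matrix norm fixed in the notation: from $h(x_k)=\|c(x_k)\|=\|A_kn_k\|\le\|A_k\|\,\|n_k\|$, it remains only to bound $\|A_k\|$ by a constant independent of $k$. This is exactly where (G1) enters. The iterates lie in the compact set $X$, and $A(x)=\nabla c(x)$ is continuous there (by the standing smoothness hypothesis on $c$, equivalently (G2)), so $\|A(x)\|$ attains a finite positive maximum on $X$. Setting $\kappa_{hn}:=\max_{x\in X}\|A(x)\|>0$, the inequality rearranges to $\|n_k\|\ge h(x_k)/\kappa_{hn}$, which is precisely \eqref{3.33}.

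I do not expect a genuine obstacle: the identity $A_kn_k=-c(x_k)$ does all the work, and the constant comes directly from compactness. The only points needing care are bookkeeping ones, namely confirming that the estimate is applied at indices $k\notin\mathcal{A}_{\rm inc}$ so that \eqref{qjn} is the operative definition of $n_k$ (this is what \eqref{3.1} secures in the small-infeasibility regime), and ensuring that the norm bound uses the operator norm compatible with the Euclidean vector norm. Finally, I would remark that, combined with the upper bound $\|n_k\|\le\kappa_nh(x_k)$ supplied by \eqref{3.1}, this shows $\|n_k\|$ and $h(x_k)$ are of the same order, which is the two-sided control used in the later convergence analysis.
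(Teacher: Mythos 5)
Your argument is correct. The identity $A_kn_k=-c(x_k)$ obtained by left-multiplying \eqref{qjn} by $A_k$, followed by $h(x_k)=\|A_kn_k\|\leq\|A_k\|\,\|n_k\|$ and the uniform bound $\|A_k\|\leq\kappa_{hn}:=\max_{x\in X}\|A(x)\|$ (finite and positive by (G1), the continuity of $\nabla c$ from (G2), and the full-row-rank assumption on $A_k$), is a complete proof of \eqref{3.33}. The paper itself supplies no proof here: it simply invokes Lemma~3.1 of \cite{Fletcher2002GLOBAL}, whose content is exactly this lower bound on the normal step in terms of the constraint violation, and whose proof is the same one-line computation you give. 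So your write-up is not a different route so much as the self-contained version of the cited result; what it buys is independence from the external reference (and it makes explicit where (G1) is actually used, namely only to bound $\|A_k\|$ from above). Your bookkeeping remarks are also apt: \eqref{qjn} is the operative definition of $n_k$ at the indices where the lemma is applied, and together with the upper bound $\|n_k\|\leq\kappa_nh(x_k)$ from \eqref{3.1} one indeed gets the two-sided equivalence between $\|n_k\|$ and $h(x_k)$ used later.
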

\begin{lemma}\label{lemma3.14}
Suppose that Assumptions G hold and let $\{x_{k_i}\}$ be a sequence with $\|P_{k_i}g_{k_i}\|\geq \epsilon$ for a constant $\epsilon>0$ independent of $k_i$. Then there exists $K\in\mathbb{N}$ so that for all $k_i\geq K$, $k_i\notin \mathcal{A}$.
\end{lemma}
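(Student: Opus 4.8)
The plan is to prove the statement directly, showing that for all sufficiently large indices of the subsequence the trial step is accepted through the switching branch (Step 8), so that $x_{k_i}$ is never entered into the filter and hence $k_i\notin\mathcal{A}$. First I would record the consequences of $h(x_{k_i})\to0$ (Theorem \ref{them1}): for $k_i$ large we have $h(x_{k_i})\le\delta_h$, so (G5) gives $k_i\notin\mathcal{A}_{\rm inc}$ and ensures that the normal-tangential decomposition is available (i.e.\ \eqref{nkjie} holds). I would also secure a uniform bound $\sigma_{\min}\le\sigma_{k_i}\le\sigma_{\max}$ along the subsequence, arguing as in Lemma \ref{lemma3.10} via Lemma \ref{3.77}: because $\|P_{k_i}g_{k_i}\|\ge\epsilon$ and $h(x_{k_i})$ is small, any overly large $\sigma_{k_i}$ would force $\rho_{k_i}\ge\eta_2$ and hence a non-increase of $\sigma$. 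With these in hand, the computation in the proof of Lemma \ref{lemma3.12} (see \eqref{3.26}--\eqref{3.29}), which uses only $\|P_{k_i}g_{k_i}\|\ge\epsilon$, $\sigma_{k_i}\le\sigma_{\max}$ and $h(x_{k_i})$ small, yields a uniform lower bound
\begin{equation*}
-m_{k_i}(\alpha)\ \ge\ \tfrac12\tilde{\Delta}\,\alpha\ >\ 0,\qquad \alpha\in(0,1],
\end{equation*}
so in particular the first half of the switching test \eqref{999}, namely $m_{k_i}(\alpha)<0$, holds for all $\alpha\in(0,1]$.

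Next I would convert this into a threshold above which the full condition \eqref{999} holds. Raising the bound to the power $\omega$, multiplying by $(\alpha\sqrt{\sigma_{k_i}})^{\omega-1}$, and using $\sigma_{k_i}\ge\sigma_{\min}$ with $\omega\ge1$, I obtain
\begin{equation*}
(-m_{k_i}(\alpha))^{\omega}(\alpha\sqrt{\sigma_{k_i}})^{\omega-1}\ \ge\ \big(\tfrac12\tilde{\Delta}\big)^{\omega}\sigma_{\min}^{(\omega-1)/2}\,\alpha^{2\omega-1},
\end{equation*}
so the second inequality of \eqref{999} holds as soon as $\alpha\ge\alpha_{k_i}^{\rm sw}$, where $\alpha_{k_i}^{\rm sw}$ equals a fixed constant times $h(x_{k_i})^{\varsigma/(2\omega-1)}$. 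Hence \eqref{999} holds throughout $[\alpha_{k_i}^{\rm sw},1]$.

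I would then combine this with the acceptance estimates already established. Lemma \ref{lemma3.8} gives \eqref{lang} for $\alpha\le\bar\alpha$, and Lemma \ref{3.9} gives filter-acceptability for $\alpha\le\min\{\nu_1,\nu_2 h(x_{k_i})\}$. Thus on
\begin{equation*}
I_{k_i}:=\big[\,\max\{\alpha_{k_i}^{\rm sw},\alpha_{k_i}^{\min}\},\ \min\{\bar\alpha,\nu_1,\nu_2 h(x_{k_i})\}\,\big]
\end{equation*}
the switching condition \eqref{999}, the acceptance test \eqref{lang} and filter-acceptability all hold at once, while $\alpha\ge\alpha_{k_i}^{\min}$ prevents the restoration at Step 5. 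Since \eqref{999} holds for every $\alpha>\alpha_{k_i}^{\rm sw}$, Step 7 routes every such trial to Step 8, so the only possible acceptances above $\alpha_{k_i}^{\rm sw}$ are at Step 8; and because the backtracking ratios lie in $[\omega_1,\omega_2]$ and $I_{k_i}$ is nonempty, the decreasing sequence $\{\alpha_{k_i,l}\}$ produces a trial in $I_{k_i}$ before any trial can fall below $\max\{\alpha_{k_i}^{\rm sw},\alpha_{k_i}^{\min}\}$. Hence the step is accepted at Step 8, $x_{k_i}$ is not added to the filter, i.e.\ $k_i\notin\mathcal{A}$, which proves the claim with $K$ the first index beyond which all the ``large $k_i$'' reductions are valid.

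The step I expect to be the main obstacle is the nonemptiness of $I_{k_i}$ for all large $k_i$. Its lower endpoint is of order $h(x_{k_i})^{\min\{\varsigma/(2\omega-1),\,\phi\}}$ while its upper endpoint $\nu_2 h(x_{k_i})$ is of order $h(x_{k_i})$; since $h(x_{k_i})\to0$, the interval is nonempty exactly when this minimum exponent is at least $1$. As $\phi>2>1$ the restoration threshold $\alpha_{k_i}^{\min}$ is harmless, so the genuine requirement is $\varsigma/(2\omega-1)\ge1$, i.e.\ the switching exponent $\varsigma$ must dominate $2\omega-1$. Extracting this exponent inequality from the standing constants, and securing the uniform bound $\sigma_{k_i}\le\sigma_{\max}$ used throughout, is the delicate heart of the argument; once $I_{k_i}$ is known to be nonempty, the line-search and filter verifications on it are precisely those supplied by Lemmas \ref{lemma3.8} and \ref{3.9}.
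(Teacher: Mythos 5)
Your overall strategy --- show that for all large $k_i$ the backtracking line search must accept a trial step through the switching branch (Step 8), so that $x_{k_i}$ is never added to the filter --- is the same as the paper's, and your use of Lemmas \ref{lemma3.8} and \ref{3.9} to certify \eqref{lang} and filter-acceptability on an interval of step sizes matches \eqref{3.41}--\eqref{3.42}. But there are two genuine gaps. First, you cannot ``secure a uniform bound $\sigma_{k_i}\le\sigma_{\max}$ by arguing as in Lemma \ref{lemma3.10}'': that lemma assumes $\vert\mathcal{A}\vert<\infty$, and its mechanism controls $\sigma$ only at iterations where $\|P_kg_k\|\ge\epsilon$ forces $\rho_k\ge\eta_2$. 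Here $\|P_kg_k\|\ge\epsilon$ is known only along the subsequence $\{k_i\}$; between consecutive indices $k_i$ and $k_{i+1}$ the parameter $\sigma$ may be multiplied by $\gamma_2$ arbitrarily many times, so $\sigma_{k_i}$ can be unbounded. This is exactly why the paper's proof splits into Case 1 ($\sigma_{k_i}$ bounded, where your bound $-m_{k_i}(\alpha)\ge\tfrac12\tilde{\Delta}\alpha$ is available) and Case 2 ($\sigma_{k_j}\to\infty$ along a subsequence), and in Case 2 it must work with the weaker bound $-m_{k_j}(\alpha)\ge\alpha\epsilon\sqrt{\epsilon}/(24\sqrt{2}\sqrt{\sigma_{k_j}})$ of Lemma \ref{lemma3.5}, recovering the lost factor through $\sigma_{k_j}^{-1/2}\ge\|n_{k_j}\|\ge h(x_{k_j})/\kappa_{hn}$, i.e.\ through Lemma \ref{lemma3.13} combined with \eqref{nkjie}. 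Your proposal never invokes Lemma \ref{lemma3.13}, which is the essential ingredient there.

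Second, the exponent condition you yourself identify as the crux, $\varsigma/(2\omega-1)\ge1$, is not implied by the standing assumptions $\varsigma>2$, $\omega\ge1$ (take $\omega=2$, $\varsigma=2.5$), so your interval $I_{k_i}$ need not be nonempty and the argument does not close. The discrepancy comes from reading \eqref{999} literally with the factor $(\alpha\sqrt{\sigma_k})^{\omega-1}$, which makes the left-hand side scale like $\alpha^{2\omega-1}$; the paper's own computations (Case 1 in the proof of Lemma \ref{lem3.1}, and \eqref{3.43}) consistently use $\alpha^{1-\omega}\sqrt{\sigma_k}^{\,\omega-1}$ instead, under which the left-hand side of the switching test is bounded below by a constant times $\alpha\,\sigma_{k_j}^{-1/2}\ge\mathrm{const}\cdot\alpha\,h(x_{k_j})$, and the compatibility requirement becomes $h(x_{k_j})^{\varsigma-2}<\mathrm{const}$ (see \eqref{3.38}), which holds for large $k_j$ precisely because $\varsigma>2$. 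A further small point: your upper endpoint $\nu_2h(x_{k_i})$ degenerates when $h(x_{k_i})=0$; the paper disposes of that subcase separately via Lemma \ref{lemma3.11}.
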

\vspace{-0.15cm}
\begin{proof}
	Using Theorem \ref{them1}, it follows that ${\rm lim}_{i\rightarrow\infty}h(x_{k_i})=0$, which indicates that $k_i\notin \mathcal{A}_{\rm inc}$ for $k_i\geq K_3$.
	Then we consider two cases.\\
	$\mathbf{Case~1}$ (there is a constant $\tilde{\kappa}>0$ independent of $k_i$ for which $\sigma_{k_i}\leq \tilde{\kappa}$ for all $k_i$).\\
	By the same argument of \eqref{3.29}, one finds that there is a constant $\bar{\Delta}>0$ independent of $k_i$ for which
	\begin{eqnarray}\label{3.34}
		-m_{k_i}(\alpha)\geq \bar{\Delta}\alpha
	\end{eqnarray}
	for $k_i$ large enough and $\alpha\in(0,1]$.
	
	In general, suppose that \eqref{3.34} follows for all $k_i$. Combining Lemma \ref{lemma3.8} and Lemma \ref{3.9}, it follows that the constants $\bar{\alpha},\nu_1,\nu_2>0$ are existent.
	
	Choose $K\in \mathbb{N}$ with $K\geq K_3$ such that, for all $k_i\geq K$,
	\begin{eqnarray}\label{3.35}
		h(x_{k_i})<{\rm  min}\bigg\{\delta_{h},\frac{\bar{\alpha}}{\nu_2},\frac{\nu_1}{\nu_2},\bigg(\frac{\bar{\Delta}^{\omega}\omega_1\nu_2}{\kappa_h}\bigg)^{\frac{1}{\varsigma-2}}\bigg\},
	\end{eqnarray}
	where $\omega_1$ is from step 10 in Algorithm 2.1. The following part of this proof is in the same way as Lemma 10 in \cite{2005Line}, which proves the claim.\\
	$\mathbf{Case~2}$ (there exist subsequences of $\{k_i\}$ such that $\sigma_{k_i}$ tends to infinity).
	
	Similarly, we can choose $K\in \mathbb{N}$ with $K\geq K_3$ for which
	\begin{eqnarray}\label{3.36}
		h(x_{k_j})<{\rm  min}\bigg\{\delta_{h},\frac{\bar{\alpha}}{\nu_2},\frac{\nu_1}{\nu_2},\frac{\epsilon\sqrt{\epsilon}}{24\sqrt{2}M_{\lambda}^{'}(\sqrt{3M_{pg}}+\beta_1)},\bigg[\frac{(\epsilon\sqrt{\epsilon})^{\omega}\omega_1\nu_2}{(24\sqrt{2})^{\omega}\kappa_{hn}\kappa_h}\bigg]^{\frac{1}{\varsigma-2}}\bigg\}
	\end{eqnarray}
	for all $k_j\geq K$. For the sake of brevity, let $\{k_j\}$ be a subsequence of $\{k_i\}$ for which
	\begin{eqnarray*}
		{\underset {j\rightarrow\infty}{\rm lim}}~ \frac{1}{\sqrt{\sigma_{k_j}}}=0.
	\end{eqnarray*}
	Hence, for $k_j$ sufficiently large, we can get that $\sigma_{k_j}\geq \Delta_{1}$ for the constant $\Delta_1>0$ in \eqref{fm}. So \eqref{3.11} follows for those $k_j$, namely,
	\begin{eqnarray}\label{3.37}
		-m_{k_j}(\alpha)\geq \alpha\frac{\epsilon\sqrt{\epsilon}}{24\sqrt{2}}\frac{1}{\sqrt{\sigma_{k_j}}}.
	\end{eqnarray}
	Without loss of generality, suppose that \eqref{3.37} follows for those $k_j$.
	
	From Lemma \ref{lemma3.11}, for all $k_j\geq K$ with $h(x_{k_j})=0$, one finds that both \eqref{999} and \eqref{lang} hold in iteration $k_j$. Hence, we can see that $k_j\notin \mathcal{A}$.
	
	For those iterations $k_j\geq K$ with $h(x_{k_j})>0$, it follows from \eqref{3.36} that $k_j\notin \mathcal{A}_{\rm inc}$,
	\begin{eqnarray}\label{3.38}
		\frac{(24\sqrt{2})^{\omega}\kappa_{hn}\kappa_h[h(x_{k_j})]^{\varsigma-1}}{(\epsilon\sqrt{\epsilon})^{\omega}}<\omega_1\nu_2h(x_{k_j})
	\end{eqnarray}
	and
	\begin{eqnarray}\label{3.39}
		\nu_2h(x_{k_j})<{\rm min}\{\bar{\alpha},\nu_1\}.
	\end{eqnarray}
	For an arbitrary $k_j$ such that $h(x_{k_j})>0$, we can define that
	\begin{eqnarray}\label{3.40}
		\zeta_{k_j}=\nu_2h(x_{k_j})\overset{\eqref{3.39}}{=}{\rm min}\{\bar{\alpha},\nu_1,\nu_2h(x_{k_j})\}.
	\end{eqnarray}
	Using Lemma \ref{lemma3.8} and Lemma \ref{3.9}, for $\alpha_{k_j,l}\leq \zeta_{k_j}$, we can conclude that both
	\begin{eqnarray}\label{3.41}
		(h(x_{k_j}+\alpha_{k_j,l_j}d_{k_j}),\ell(x_{k_j}+\alpha_{k_j,l_j}d_{k_j}))\notin \mathcal{F}_{k_j}
	\end{eqnarray}
	and
	\begin{eqnarray}\label{3.42}
		\ell(x_{k_j}+\alpha_{k_j,l_j}d_{k_j})-\ell(x_{k_j})\leq \mu [m_{k_j}(\alpha_{k_j,l_j})]
	\end{eqnarray}
	hold. Let $\alpha_{k_j,L}$ be the first trial step size such that \eqref{3.41} and \eqref{3.42} hold. The step 10 in Algorithm 2.1 provides that
	\begin{eqnarray*}
		\alpha\geq \omega_1\zeta_{k_j}\overset{\eqref{3.40}}{=}\omega_1\nu_2h(x_{k_j})\overset{\eqref{3.38}}{>}\frac{(24\sqrt{2})^{\omega}\kappa_{hn}\kappa_h[h(x_{k_j})]^{\varsigma-1}}
		{(\epsilon\sqrt{\epsilon})^{\omega}}
	\end{eqnarray*}
	for $\alpha\geq \alpha_{k_j,L}$. Moreover, \eqref{3.33} yields that
	\begin{equation*}
		\frac{1}{\sqrt{\sigma_{k_j}}}\geq\|n_{k_j}\| \geq  \frac{1}{\kappa_{hn}}h(x_{k_j}).
	\end{equation*}
	From above results, one finds that
	\begin{eqnarray}\label{3.43}
		&&[-m_{k_j}(\alpha)]^{\omega}\bigg(\alpha\frac{1}{\sqrt{\sigma_{k_j}}}\bigg)^{1-\omega}\nonumber\\
		&\overset{\eqref{3.37}}{\geq}&
		\bigg(\frac{\epsilon\sqrt{\epsilon}}{24\sqrt{2}}\bigg)^{\omega}\alpha(\sqrt{\sigma_{k_j}})^{-1}\nonumber\\
		&>&\kappa_h[h(x_{k_j})]^{\varsigma}
	\end{eqnarray}
	for $\alpha\geq \alpha_{k_j,L}$.
	
	From \eqref{3.43}, we know that the algorithm goes to step 8. Furthermore, \eqref{3.42} implies that \eqref{lang} follows for $\alpha_{k_j,L}$. It remains to verify
	\begin{eqnarray*}
		\alpha_{k_j,L}\geq \alpha_{k_j}^{\rm min}.
	\end{eqnarray*}
	Following \eqref{3.43}, one finds that
	\begin{eqnarray*}
		&&\alpha_{k_j,L}\\
		&\geq&\frac{\kappa_h[h(x_{k_j})]^{\varsigma}(\sqrt{\sigma_{k_j}})^{1-\omega}}{[-g_{k_j}^Tt_{k_j}-
			\frac{1}{2}\alpha_{k_j,L}t_{k_j}^TH_{k_j}t_{k_j}-\frac{1}{3}\sigma_{k_j}\alpha_{k_j,L}^2\|t_{k_j}\|^3+(\nabla\lambda_{k_j}^Td_{k_j})^Tc_{k_j}]^{\omega}}\\
		&\geq& \frac{\kappa_h[h(x_{k_j})]^{\varsigma}(\sqrt{\sigma_{k_j}})^{1-\omega}}{[-g_{k_j}^Tt_{k_j}+
			(\nabla\lambda_{k_j}^Td_{k_j})^Tc_{k_j}]^{\omega}},
	\end{eqnarray*}
	which together \eqref{alphamin} imply that $\alpha_{k_j,L}\geq \alpha_{k_j}^{\rm min}$.
	
	Therefore, the algorithm does not go to the feasibility restoration procedure. Following this result and \eqref{3.41}-\eqref{3.43}, one finds that $\alpha_{k_j,L}$ is the accepted step size $\alpha_{k_j}$. Hence, for all $k_j\geq K$, $k_j\notin \mathcal{A}$ holds because of the mechanism of Algorithm 2.1.
	
	Thus, no matter Case 1 or Case 2 happens, we know that the desired conclusion follows.
\end{proof}

The following theorem gives global convergence conclusion.

\begin{theorem}\label{theorem3.15}
	Suppose that Assumptions G hold. Then
	\begin{eqnarray*}
		{\underset {k\rightarrow\infty}{\rm lim\,inf}}~\|P_kg_k\|=0.
	\end{eqnarray*}
\end{theorem}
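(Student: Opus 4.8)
The plan is to prove the statement by contradiction and to reduce it immediately to the two optimality lemmas already in hand, splitting according to whether the filter is augmented finitely or infinitely often. Suppose, to the contrary, that $\liminf_{k\to\infty}\|P_kg_k\|>0$. Then there exist a constant $\epsilon>0$ and an index $N$ such that $\|P_kg_k\|\geq\epsilon$ for every $k\geq N$; this is exactly the uniform lower bound \eqref{38} under which Lemmas \ref{lemma3.12} and \ref{lemma3.14} operate.

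First I would dispose of the case $|\mathcal{A}|<\infty$. Here Lemma \ref{lemma3.12} applies verbatim and gives $\lim_{k\to\infty}\|P_kg_k\|=0$, which flatly contradicts $\|P_kg_k\|\geq\epsilon$ for all $k\geq N$. For the complementary case $|\mathcal{A}|=\infty$, I would apply Lemma \ref{lemma3.14} to the tail $\{x_k\}_{k\geq N}$, each term of which satisfies the hypothesis $\|P_kg_k\|\geq\epsilon$. That lemma produces an index $K$ with $k\notin\mathcal{A}$ for all $k\geq\max\{N,K\}$, whence $\mathcal{A}\subseteq\{0,1,\dots,\max\{N,K\}-1\}$ is finite, contradicting $|\mathcal{A}|=\infty$. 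Since both cases are impossible, the supposition fails and $\liminf_{k\to\infty}\|P_kg_k\|=0$.

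At the level of this theorem there is essentially no obstacle: it is a short corollary of the dichotomy on $|\mathcal{A}|$ combined with Lemmas \ref{lemma3.12} and \ref{lemma3.14}. The real difficulty lives in those two lemmas, which themselves rest on the feasibility conclusion $\lim_k h(x_k)=0$ of Theorem \ref{them1}, on the sufficient-decrease estimate \eqref{3.11}, and on the uniform regularisation bound $\sigma_k\leq\sigma_{\max}$ from Lemma \ref{lemma3.10}. The only genuinely structural insight required here is to recognise that the two-way split on $|\mathcal{A}|$ is precisely what makes these lemmas jointly exhaustive, so that each possibility yields a contradiction with the negation of the claim.
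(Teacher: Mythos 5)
Your proposal is correct and follows essentially the same route as the paper: the dichotomy on $|\mathcal{A}|$, with Lemma \ref{lemma3.12} handling the finite case and Lemma \ref{lemma3.14} forcing $\mathcal{A}$ to be finite in the other case, yielding the contradiction. If anything, your phrasing of the contradiction hypothesis as $\liminf_{k\to\infty}\|P_kg_k\|>0$ is cleaner than the paper's Case~2, which loosely writes $\limsup$ where the negation of the $\liminf$ claim is what is actually used.
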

\begin{proof}
	To prove our claim, we consider two cases.\\
	$\mathbf{Case~1}$ ($\vert\mathcal{A}\vert<\infty$). Lemma \ref{lemma3.12} has proven this claim.\\
	$\mathbf{Case~2}$ (There exists a subsequence $\{x_{k_i}\}$ for which $k_i\in \mathcal{A}$).
	
	To obtain a contradiction, assume that ${\rm lim\,sup}_{k\rightarrow\infty}\|P_kg_k\|>0$. Hence, there exist a subsequence $\{x_{k_{i_j}}\}$ so that $\|P_{k_{i_j}}g_{k_{i_j}}\|>\epsilon$ for a constant $\epsilon>0$ independent of $k_{i_j}$ and all $k_{i_j}$. Then, from Lemma \ref{lemma3.14}, we know that there is an iteration $k_{i_j}$ such that $k_{i_j}\notin \mathcal{A}$, which is in contradiction with the choice of $\{x_{k_{i_j}}\}$ such that
	${\rm lim }_{i\rightarrow\infty}\|P_{k_i}g_{k_i}\|=0$. Therefore, the required result holds.
\end{proof}
\vspace{-0.5cm}
\section{Numerical Results}\label{sec4}
In this section, we present numerical results to show the efficiency of LsFSARC (Algorithm 2.1). We performed it with Intel(R) Core(TM) i5-6200U CPU @ 2.30GHz 2.40GHz. Numerical testing was implemented in MATLAB version 9.4.0.813654 (R2018a).

In our implementation, the parameters: $\varepsilon=10^{-6}$, $\beta_1=0.1$, $\beta_2=100$, $\beta_3=0.01$, $\gamma_h=10^{-5}$, $\kappa_h=10^{-4}$, $\tau_2=2.01$, $\tau_1=2$, $\eta_1=0.01,~\eta_2=0.9$. The algorithm terminates when $${\rm Res}:=\max \{\|P_kg_k\|,\|c(x_k)\|\}\leq\varepsilon$$ is satisfied.
The numerical results are presented in Table \ref{table5.1}. The test problems are from CUTEst collection \cite{CUTEst}. $n$ denotes the number of variables. $m$ denotes the number of equality constraints.   NF and NC are the numbers of computation of the objective function and constraint function, respectively. NIT denotes the numbers of iterations. The numbers of computation of the objective function's gradient is denoted by NG. The CPU times in Table \ref{table5.1} are counted in seconds.

For comparison, we include the corresponding results obtained from Algorithm 2.1 (Alg. 2.1)  in \cite{Chen} and Algorithm 2.2 (Alg. 2.2) in \cite{liu2011SIOPT}. The comparison numerical results are reported in Table \ref{table5.5} and Table \ref{table5.6}, respectively. The numerical results of LANCELOT are also from literatures  \cite{Chen} and  \cite{liu2011SIOPT}, respectively. Furthermore, to display the performance based on the numerical results in Table \ref{table5.5} and Table \ref{table5.6}  visually, we use the logarithmic performance profiles \cite{Dolan} (see Fig \ref{Figure1} and Fig \ref{Figure2}). From Table \ref{table5.5}, Table \ref{table5.6},  Fig \ref{Figure1}, and Fig \ref{Figure2}, it can be seen that LsFSARC can be comparable with those algorithms for the given problems.
~\\
\begin{center}
	{
		\begin{longtable}{ccccccccccccccccllll}
			\caption{Numerical results of the LsFSARC}\label{table5.1}
			\endfirsthead
			\multicolumn{10}{l}{Table \ref{table5.1} continued}\\
			\hline
			\multirow{2}{*}{Problem}  & \multicolumn{2}{l}{Dimension}& \multirow{2}{*}{ NIT}&\multirow{2}{*}{NF}&\multirow{2}{*}{NC}&\multirow{2}{*}{NG}&\multirow{2}{*}{Res }&\multirow{2}{*}{CPU-Time }  \\
			\cline{2-3}
			&$n$&$m$                       & & &                                   &        &&&\\
			\hline
			\endhead
			\hline
			\multirow{2}{*}{Problem}  & \multicolumn{2}{l}{Dimension}& \multirow{2}{*}{ NIT}& \multirow{2}{*}{ NF}&\multirow{2}{*}{NC} &\multirow{2}{*}{NG}&\multirow{2}{*}{Res }&\multirow{2}{*}{CPU-Time }  \\
			\cline{2-3}
			&$n$&$m$                       & & &                                &        &&& \\
			\hline
			AIRCRFTA &8     &5        &2    &3  &3  &3  &1.5932e$-$08   &0.0164\\
			ARGTRIG &200   &200       &3    &3 &4   &4  &6.8423e$-$07   &3.8064 \\
			BDVALUE  &102   &100      &2     &3 &3  &3    &9.5721e$-$10   &0.1291  \\
			BOOTH    &2     &2        &1     &2  &2  &2    &0.0000e$+$00   &0.0074   \\
			BROYDN3D &500   &500     &4 &5     &5 &5     &1.0634e$-$09   &18.2427 \\
			BT1     &2     &1        &5 &5     &6   &6   &1.3889e$-$07   &0.0234 \\
			BT2     &3     &1        &9  &9     &10  &10   &7.2220e$-$10   &0.0297\\
			BT3     &5     &3        &3 &4     &4 &4     &1.0934e$-$10   &0.0084 \\
			BT4     &3     &2        &5 &5     &1   &6   &2.5683e$-$07   &0.0200  \\
			BT5     &3     &2        &7  &8     &8  &8    &2.2452e$-$08   &0.0147  \\
			BT6     &5     &2        &12 &15    &13  &13   &4.2933e$-$07   &0.0277  \\
			BT7     &5     &3        &5  &4     &6  &6    &2.9464e$-$07   &0.0334  \\
			BT8     &5     &2        &6  &6     &7  &7    &3.5654e$-$07   &0.0213 \\
			BT9     &4     &2        &9  &11    &10 &10    &8.0670e$-$08   &0.0198  \\
			BT10    &2     &2        &2 &2     &3  &3    &2.0895e$-$09   &0.0187 \\
			BT11    &5     &3        &9 &9     &10  &10   &5.2740e$-$09   &0.0349  \\
			BT12    &5     &3        &8  &8     &9  &9    &3.5613e$-$07   &0.0315 \\
			BYRDSPHR &3     &2       &8  &8     &9  &9    &3.5170e$-$13   &0.0346  \\
			CLUSTER  &2     &2       &4 &7     &5   &5   &3.7799e$-$10   &0.0174   \\
			DTOC3    &299   &198     &27 &28    &28 &28    &3.9893e$-$07   &42.3079                \\
			DTOC4    &299   &198     &3 &4     &4   &4   &2.3783e$-$07   &17.4360                \\
			DTOC5    &19    &9       &0  &1     &1  &1    &0.0000e$+$00   &0.0009                \\
			GENHS28  &10    &8       &5 &5     &6  &6    &3.3345e$-$08   &0.0360 \\
			GOTTFR  &2     &3        &5 &8     &6   &6   &2.1823e$-$10   &0.0169  \\
			HAGER1   &1001  &500     &16 &16    &17 &17    &5.8785e$-$07   &102.1731                 \\
			HAGER2   &1001  &500     &12 &12    &13 &13    &4.6752e$-$07   &71.8340                 \\
			HAGER3   &1001  &500     &10 &10    &11 &11    &2.9516e$-$07   &52.7732                 \\
			HATFLDF  &3     &3       &3  &3     &4  &4    &1.7795e$-$08   &0.0387 \\
			HATFLDG  &25    &25      &2 &2     &3   &3   &4.0738e$-$12   &0.0684                 \\
			HEART8   &8     &8       &5  &6     &6  &6    &1.8500e$-$07   &0.2510                 \\
			HIMMELBA  &2     &2      &1  &1     &2 &2     &5.3134e$-$07   &0.0108  \\
			HIMMELBC  &2     &2      &2  &2     &3  &3    &2.8424e$-$13   &0.0260   \\
			HIMMELBE  &3     &3      &5  &1     &2  &6    &7.4308e$-$07   &0.0121  \\
			HS06    &2     &1        &13  &13    &14  &14   &6.0080e$-$08   &0.0232 \\
			HS07    &2     &1        &7  &8     &8   &8   &5.4175e$-$08   &0.0273 \\
			HS08    &2     &2        &2 &2     &3   &3   &2.5421e$-$13   &0.0105    \\
			HS09    &2     &1        &6  &7     &7   &7   &3.9241e$-$07   &0.0112   \\
			HS26    &3     &1        &9  &10    &10   &10  &1.2431e$-$08   &0.0189 \\
			HS27    &3     &1        &26  &29    &27   &27  &3.4457e$-$08   &0.0447  \\
			HS28    &3     &1        &5  &7     &6   &6   &1.9369e$-$08   &0.0081  \\
			HS39    &4     &2        &9 &11    &10   &10  &8.1036e$-$08   &0.0350  \\
			HS40    &4     &3        &19   &19    &20 &120    &9.5368e$-$07   &0.0527  \\
			HS42    &4     &2        &28   &52    &29 &29    &8.4021e$-$07   &0.0657  \\
			HS46    &5     &2        &12  &13    &13  &13   &8.8987e$-$07   &0.0551  \\
			HS47    &5     &3        &20  &20    &21  &21   &5.8020e$-$07   &0.1012 \\
			HS48    &5     &2        &4  &5     &5   &5   &8.6615e$-$08   &0.0169 \\
			HS49    &5     &2        &22  &23    &23  &23   &7.8456e$-$07   &0.0975\\
			HS50    &5     &3        &12  &15    &13  &13   &1.3377e$-$07   &0.0667 \\
			HS51    &5     &3        &3  &5     &4   &4   &9.8047e$-$15   &0.0143  \\
			HS52    &5     &3        &6   &6     &7   &7   &3.1612e$-$10   &0.0209 \\
			HS56    &7     &4        &0 &1     &1  &1    &0.0000e$+$00   &0.0043    \\
			HS61    &3     &2        &6   &6     &7  &7    &6.3198e$-$07   &0.0232 \\
			HS77    &5     &2        &11   &13    &12 &12    &7.8820e$-$07   &0.0267 \\
			HS78    &5     &3        &12  &14    &13   &13  &5.3810e$-$07   &0.0303 \\
			HS79    &5     &3        &8  &8     &9   &9   &8.8824e$-$07   &0.0353 \\
			HS100LNP  &7     &2      &15   &21    &16   &16  &4.9734e$-$07   &0.0540 \\
			HS111LNP  &10    &3      &12  &12    &13   &13  &8.0141e$-$08   &0.0587 \\
			HYPCIR    &2     &2      &1   &1     &2  &2    &5.4209e$-$07   &0.0118  \\
			INTEGREQ  &5     &5      &1  &1     &2    &2  &3.8263e$-$07   &0.0117           \\
			MARATOS   &2     &1      &3  &4     &4  &4    &2.6776e$-$07   &0.0072 \\
			MWRIGHT   &5     &3      &8  &10    &9   &9   &6.1967e$-$09   &0.0267 \\
			ORTHREGB  &27    &6     &7      &8       &8       &8 &1.1322e$-$08        &0.2326           \\
			POWELLSQ  &2     &2      &1   &1     &2   &2   &8.1603e$-$07   &0.0099             \\
			RECIPE  &3     &3        &2  &2     &3   &3   &3.0307e$-$15   &0.0118            \\
			S235    &3     &1        &16  &17    &17   &17  &1.8935e$-$08   &0.0230 \\
			S252    &3     &1     &14  &14    &15  &15   &4.7686e$-$08   &0.0267  \\
			S265    &4     &2     &1   &2     &2   &2   &1.8081e$-$16   &0.0073 \\
			S269    &5     &3     &5   &5     &6    &6  &2.1785e$-$07   &0.0201 \\
			S316    &2     &1     &2   &2     &3    &3  &4.8916e$-$08   &0.0094          \\
			S317    &2     &1     &5   &5     &6    &6  &3.3780e$-$12   &0.0143 \\
			S318    &2     &1     &5   &5     &6    &6  &3.2496e$-$11   &0.0143  \\
			S319    &2     &1     &7   &7     &8    &8  &6.3140e$-$08   &0.0151  \\
			S320    &2     &1     &25  &46    &26   &26  &9.1824e$-$07   &0.0498  \\
			S321    &2     &1     &17  &35    &18   &18  &2.9056e$-$07   &0.0406  \\
			S335    &3     &2     &11  &11    &12   &12  &1.2615e$-$07   &0.1254  \\
			S336    &3     &2     &7   &7     &8    &8  &2.3466e$-$08   &0.0211\\
			S338    &3     &2     &6   &6     &7    &7  &9.4251e$-$07   &0.0209  \\
			S344    &3     &1     &8   &10    &9    &9  &6.4195e$-$07   &0.0146 \\
			S373    &9     &6     &13  &12    &14   &14  &7.6678e$-$07   &0.1131 \\
			S378    &10    &3     &13  &13    &14   &14  &1.9417e$-$10   &0.0707 \\
			S394    &20    &12    &9   &9     &10   &10  &2.7574e$-$07   &0.1158  \\
			S395    &50    &1     &9   &8     &10   &10  &5.1312e$-$07   &0.2480  \\
			ZANGWIL3  &3    &3    &2   &2     &3    &3 &2.0128e$-$47   &0.0208 \\
			\bottomrule
		\end{longtable}
	}
\end{center}
\begin{center}
	{
		\begin{longtable}{cccccccccccccc}
			\caption{Comparison results 1}\label{table5.5}
			\endfirsthead
			\multicolumn{8}{l}{Table \ref{table5.5} continued}\\
			\hline
			\multirow{2}{*}{Problem}  & \multicolumn{2}{l}{Dimension}&&\multicolumn{2}{l}{Alg. 2.1 in \cite{Chen} }&&\multicolumn{2}{l}{LsFSARC}&&\multicolumn{2}{l}{LANCELOT} \\
			\cline{2-3}
			\cline{5-6}
            \cline{8-9}
            \cline{11-12}
			&$n$&$m$                       & &NF&NC& &NF                                   &NC        &&NF&NC&\\
			\hline
			\endhead
			\hline
			\multirow{2}{*}{Problem}  & \multicolumn{2}{l}{Dimension}& &\multicolumn{2}{l}{ Alg. 2.1 in \cite{Chen}}&&\multicolumn{2}{l}{LsFSARC}&&\multicolumn{2}{l}{LANCELOT} \\
			\cline{2-3}
			\cline{5-6}
            \cline{8-9}
            \cline{11-12}
			&$n$&$m$                       & &NF&NC& & NF                               & NC       &&NF&NC& \\
			\hline
			AIRCRFTA &8     &5            &&12   &21          &&3     &3   &&5  &5     \\
			ARGTRIG &200   &200          & &4   &4            &&3     &4    &&7   &7      \\
			BDVALUE  &102   &100          &&4   &6            &&3     &3    &&2 &2        \\
			BOOTH    &2     &2            &&2   &2            &&2     &2    &&3  &3 \\
			BROYDN3D &500   &500         &&14   &14          &&5     &5   &&6  &6  \\
			BT1     &2     &1            &&8   &9            &&5     &6    &&48 &48   \\
			BT2     &3     &1            &&11   &11          &&9     &10    &&22 &22       \\
			BT3     &5     &3            &&6  &6             &&4     &4      &&16 &16          \\
			BT4     &3     &2            &&9   &9            &&5     &1       &&28 &28           \\
			BT5     &3     &2            &&5   &5            &&8     &8   &&16 &16                 \\
			BT6     &5     &2            &&14   &14          &&15    &13    &&26 &26                  \\
			BT7     &5     &3            &&16  &18           &&4     &6   &&48&48    \\
			BT8     &5     &2            &&12   &19          &&6     &7    &&25 &25            \\
			BT9     &4     &2            &&13   &15          &&11    &10   &&20 &20          \\
			BT10    &2     &2            &&7   &7            &&2     &3    &&19 &19              \\
			BT11    &5     &3            &&8   &8            &&9     &10    &&19 &19                \\
			BT12    &5     &3            &&7   &8            &&8     &9      &&21 &21        \\
			BYRDSPHR &3     &2           &&10   &13           &&8     &9   &&22 &22                  \\
			CLUSTER  &2     &2            &&8   & 8           &&7     &5   &&10 &10           \\
			DTOC3    &299   &198         &&4   &4            &&28    &28    &&26  &26 \\
 			DTOC4    &299   &198         &&3   &  3          &&4     &4  &&17 &17\\
			DTOC5    &19   &9            &&4   &4            &&1     &1   &&14 &14   \\
			GENHS28  &10    &8            &&3   &3            &&5     &6  &&11 &11          \\
			GOTTFR  &2     &3             &&9   & 14          &&8     &6  &&17 &17       \\
			HAGER1   &1001  &500         &&2   &2            &&16    &17      &&13 &13  \\
			HAGER2   &1001  &500         &&6   &6            &&12    &13      &&12   &12  \\
			HAGER3   &1001  &500         &&14   &14          &&10    &11        &&13 &13\\ 			
            HATFLDF  &3     &3           &&28   &43          &&3     &4       &&62 &62\\
			HATFLDG  &25    &25           &&8   &9            &&2     &3   &&16 &16   \\ 			
			HEART8   &8     &8          &&48   &66          &&6     &6    &&149 &149  \\ 			
            HIMMELBA  &2     &2         &&3     &3          &&1     &2    &&3 &3  \\
			HIMMELBC  &2     &2         &&5     &5          &&2     &3    &&9 &9         \\
			HIMMELBE  &3     &3         &&3     &3          &&1     &2    &&6 &6           \\
			HS06    &2     &1           &&10   &13          &&13    &14     &&30 &30              \\
			HS07    &2     &1           &&9   &11           &&8     &8      &&17 &17                   \\
			HS08    &2     &2           &&5   &5            &&2     &3     &&10 &10            \\
			HS09    &2     &1           &&4   &4            &&7     &7     &&6 &6                \\
			HS26    &3     &1           &&19   &19          &&10    &10   &&22 &22       \\
			HS27    &3     &1           &&23   &25          &&29    &27   &&14 &14         \\
			HS28    &3     &1           &&2   &2            &&7     &6    &&7 &7   \\
			HS39    &4     &2           &&13   &15          &&11    &10   &&20 &20             \\
			HS40    &4     &3           &&4   & 4           &&19    &20   &&16 &16           \\
			HS42    &4     &2           &&4   & 4           &&52    &29   &&13 &13              \\
			HS46    &5     &2           &&17   & 17         &&13    &13   &&19 &19                     \\
			HS47    &5     &3           &&18   & 18         &&20    &21   &&21 &21                               \\
			HS48    &5     &2           &&3   &3            &&5     &5    &&8 &8                       \\
			HS49    &5     &2           &&15   &15          &&23    &23   &&18 &18                                    \\
			HS50    &5     &3           &&9   &9            &&15    &13   &&11 &11                             \\
			HS51    &5     &3           &&2   & 2           &&5     &4   &&7 &7                      \\
			HS52    &5     &3           &&3   & 3           &&6     &7   &&13 &13                     \\
			HS56    &7     &4           &&9   & 9           &&1     &1   &&18 &18                     \\
			HS61    &3     &2           &&6   & 6           &&6     &7   &&16 &16        \\
			HS77    &5     &2           &&16   &16          &&13    &12   &&23 &23             \\
			HS78    &5     &3           &&5  &5             &&14    &13   &&12 &12  \\
			HS79    &5     &3           &&5   & 5           &&8     &9    &&11 &11     \\
			HS100LNP  &7     &2         &&8    &10          &&21    &16   &&23 &23                   \\
			HS111LNP  &10    &3         &&12     &13        &&12    &13    &&69 &69        \\
			HYPCIR    &2     &2           &&5     &6          &&1     &2   &&9 &9                         \\
			INTEGREQ  &5     &5           &&2     & 2         &&1     &2   &&4 &4     \\
			MARATOS   &2     &1           &&4     &4          &&4     &4   &&9 &9                \\
			MWRIGHT   &5     &3           &&8     &8          &&10    &9   &&18 &18    \\
			ORTHREGB     &27         &6      &&25 &30             &&8     &8      &&74         &74           \\
			POWELLSQ  &2     &2           &&12    &15         &&1     &2   &&24 &24                 \\
			RECIPE  &3     &3             &&12   &12          &&2     &3    &&17 &17           \\
            ZANGWIL3  &3     &3              &&6     &6        &&2     &3   &&8 &8                     \\

			\bottomrule
		\end{longtable}
	}
\end{center}
\begin{center}
	{
		\begin{longtable}{ccccccccccccccccccccc}
			\caption{Comparison results 2}\label{table5.6}
			\endfirsthead
			\multicolumn{18}{l}{Table \ref{table5.6} continued}\\
			\hline
			\multirow{2}{*}{Problem}  & \multicolumn{2}{l}{Dimension}&&\multicolumn{3}{l}{Alg. 2.2 in \cite{liu2011SIOPT} }&&\multicolumn{3}{l}{LsFSARC}&&\multicolumn{3}{l}{LANCELOT} \\
			\cline{2-3}
			\cline{5-7}
            \cline{9-11}
            \cline{13-15}
			&$n$&$m$                       & &NF&NC&NG& &NF                                   &NC&NG        &&NF&NC&NG&\\
			\hline
			\endhead
			\hline
			\multirow{2}{*}{Problem}  & \multicolumn{2}{l}{Dimension}& &\multicolumn{3}{l}{ Alg. 2.2 in \cite{liu2011SIOPT}}&&\multicolumn{3}{l}{LsFSARC}&&\multicolumn{3}{l}{LANCELOT} \\
		    \cline{2-3}
			\cline{5-7}
           \cline{9-11}
            \cline{13-15}
			&$n$&$m$                       & &NF&NC&NG& & NF                               & NC&NG       &&NF&NC&NG& \\
			\hline
			AIRCRFTA &8     &5            &&3   &3&3          &&3     &3 &3  &&5  &5  &5   \\
			BDVALUE  &102   &100          &&3   &3&2            &&3     &3   &3 &&2 &2  &2      \\
			BOOTH    &2     &2            &&2   &2&2            &&2     &2   &2 &&4  &4 &4\\
			BT1     &2     &1            &&11   &11&8            &&5     &6  &6  &&57 &57 &47   \\
			BT3     &5     &3            &&8  &8 &8             &&4     &4   &4   &&15 &15 &15          \\
			BT4     &3     &2            &&14   &14 &14            &&5     &1 &6      &&27 &27 &26           \\
			BT5     &3     &2            &&9   &9 &9            &&8     &8  &8 &&67 &67   &43              \\
			BT6     &5     &2            &&30   &30 &29          &&15    &13  &13  &&51 &51 &39                 \\
			BT8     &5     &2            &&11   &11 &11          &&6     &7  &7  &&27 &27  & 25         \\
			BT9     &4     &2            &&57   &57 &41          &&11    &10  &10 &&23 &23 &23          \\
			BT10    &2     &2            &&8   &8 &8            &&2     &3  &3  &&21 &21 &21              \\
			BT11    &5     &3            &&13   &13 &13            &&9     &10  &10  &&23 &23 &20                \\
			BT12    &5     &3            &&9   &9&8            &&8     &9   &9   &&23 &23 &19       \\
			CLUSTER  &2     &2            &&8   & 8  &8         &&7     &5  &5 &&13 &13 &10           \\
			GENHS28  &10    &8            &&9   &9&9            &&5     &6 &6 &&10 &10 &10          \\
			GOTTFR  &2     &3             &&9   & 9&6          &&8     &6 &6 &&12 &12 &11       \\
			HATFLDG  &25    &25           &&25   &25 &7            &&2     &3 &3 &&24 &24 &20   \\ 			
            HIMMELBA  &2     &2         &&2     &2 &2          &&1     &2  &2  &&3 &3 &3  \\
			HIMMELBC  &2     &2         &&7     &7&6          &&2     &3  &3  &&9 &9 &8        \\
			HIMMELBE  &3     &3         &&3     &3  &3        &&1     &2  &6  &&4 &4 &4           \\
			HS06    &2     &1           &&14  &14 &11          &&13    &14  &14   &&58 &58 &42              \\
			HS07    &2     &1           &&12   &12 &12           &&8     &8   &8   &&24 &24 &19                   \\
			HS08    &2     &2           &&6   &6&5            &&2     &3   &3  &&11 &11  &10            \\
			HS09    &2     &1           &&7   &7 &7            &&7     &7   &7  &&11 &11 &11                \\
			HS26    &3     &1           &&36   &36 &26          &&10    &10  &10 &&33 &33 &31       \\
			HS28    &3     &1           &&10   &10 &9            &&7     &6 &6   &&4 &4 &4   \\
			HS39    &4     &2           &&57   &57 &41          &&11    &10  &10 &&674 &674 &630             \\
			HS40    &4     &3           &&7   & 7&7           &&19    &20  &20 &&15 &15 &14           \\
			HS42    &4     &2           &&11   & 11&9           &&52    &29  &29 &&13 &13 &13             \\
			HS46    &5     &2           &&29   & 29 &27         &&13    &13  &13 &&28 &28 &25                     \\
			HS48    &5     &2           &&13   &13 &10           &&5     &5  &5  &&4 &4 &4                       \\
			HS49    &5     &2           &&27   &27 &22          &&23    &23  &23 &&25 &25 &25                                    \\
			HS50    &5     &3           &&25   &25 &15            &&15    &13 &13  &&19 &19 &19                             \\
			HS51    &5     &3           &&10   & 10 &9           &&5     &4  &4 &&3 &3 &3                      \\
			HS52    &5     &3           &&8   & 8 &7           &&6     &7  &7 &&11 &11 &11                    \\
			HS61    &3     &2           &&13   &13 & 11          &&6     &7  &7 &&18 &18 &17        \\
			HS77    &5     &2           &&29   &29 &26          &&13    &12  &12 &&35 &35 &30             \\
			HS78    &5     &3           &&9  &9 &9             &&14    &13   &13&&26 &26 &15  \\
			HS79    &5     &3           &&13   & 13 &13           &&8     &9  &9  &&12 &12 &12     \\
			HS100LNP  &7     &2         &&79    &79 &35          &&21    &16 &16 &&510 &510 &468                     \\
			HYPCIR    &2     &2           &&6     &6  &5        &&1     &2  &2 &&7 &7&7                         \\
			INTEGREQ  &5     &5           &&2     & 2  &2       &&1     &2  &2 &&3 &3 &3     \\
			MARATOS   &2     &1           &&5     &5 &5          &&4     &4  &4 &&9 &9 &9               \\
			ORTHREGB  &27   &6         &&7  &7  &7             &&8     &8      &8        &&140 &140&116           \\
			RECIPE  &3     &3             &&12   &12 &12         &&2     &3  &3  &&43 &43   &37        \\
            ZANGWIL3  &3     &3              &&2     &2 &3       &&2     &3  &3 &&8 &8  &8                    \\

			\bottomrule
		\end{longtable}
	}
\end{center}
\begin{figure}[H]
  \centering
  \includegraphics[width=7cm,height=5cm]{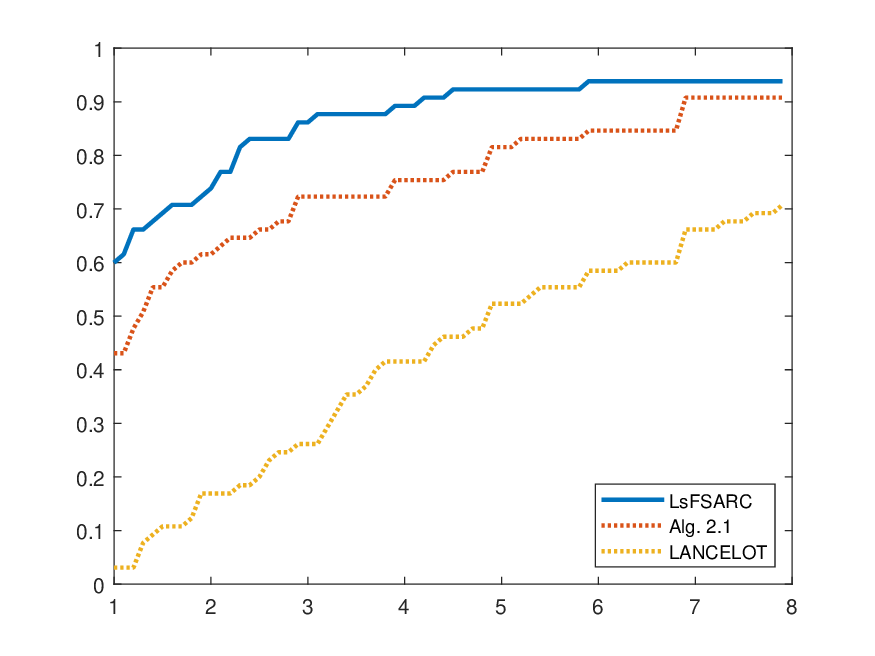}
  \quad
  \includegraphics[width=7cm,height=5cm]{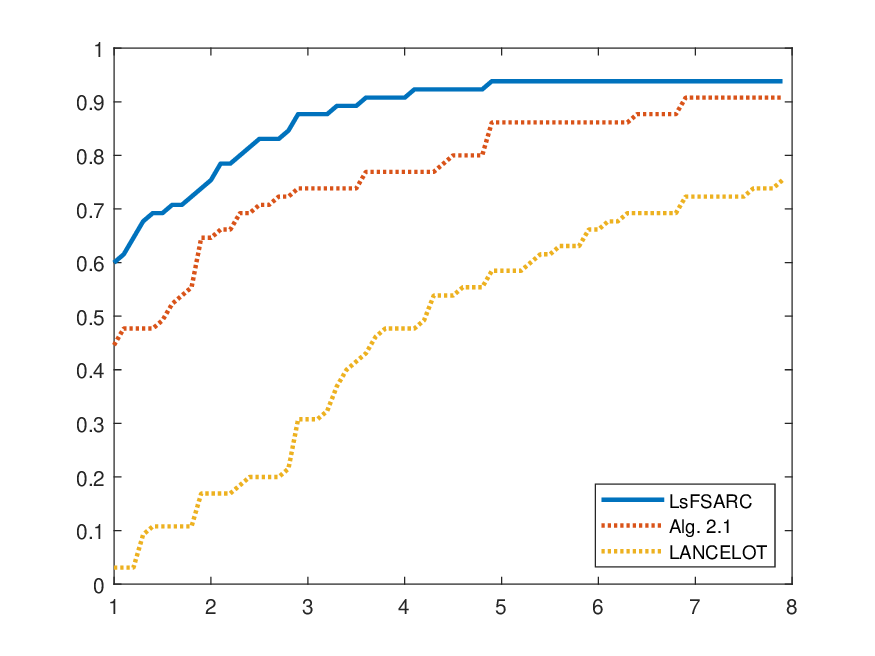}\\[0pt]
  \caption{ Performance profiles based on NF (left) and NC (right)}\label{Figure1}
\end{figure}

\begin{figure}[H]
  \centering
  \includegraphics[width=7cm,height=5cm]{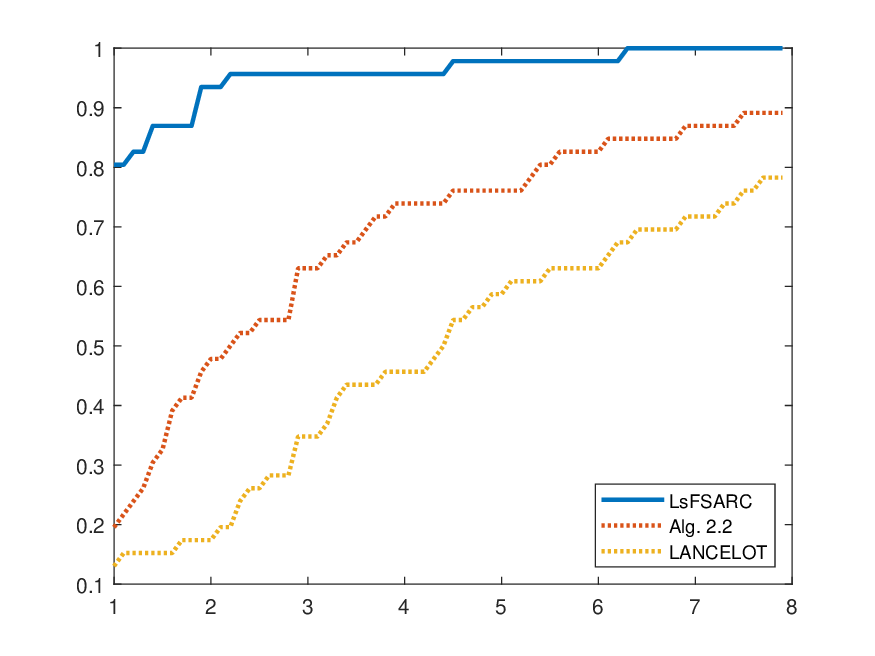}
  \quad
  \includegraphics[width=7cm,height=5cm]{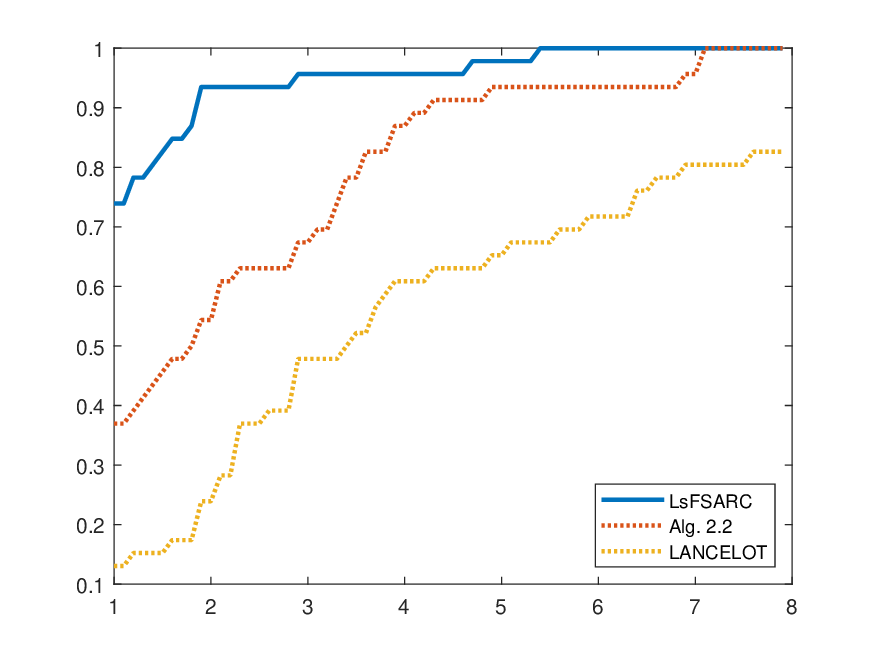}\\[0pt]
  \caption{ Performance profiles based on NF (left) and NG (right)}\label{Figure2}
\end{figure}

\section{Conclusion and discussion}\label{sec5}
We have introduced a line search filter sequential adaptive regularization algorithm using cubics (LsFSARC) to solve nonlinear equality constrained programming. It benefits from the idea of SQP methods. Composite step methods and  projective matrices are used to obtain the new step which is decomposed into the sum of a normal step and a tangential step. The tangential step is computed by solving a standard ARC subproblem. The global convergence analysis is reported under some suitable assumptions.  Preliminary numerical results and comparison results are presented to demonstrate the performance of LsFSARC. It can be observed that LsFSARC can be comparable with Algorithm 2.1 in \cite{Chen} and Algorithm 2.2 in \cite{liu2011SIOPT} for these test problems.

Compared with SQP algorithms where penalty function is employed as a merit function, LsFSARC  is a penalty-free method and does not involve the calculation and update of penalty parameters. Naturally, the convergence analysis does not rely on the penalty parameters. Moreover, compared with two impressive and powerful penalty-free methods in \cite{2005Line,liu2011SIOPT} which both require that the Lagrangian Hessian or its approximation is uniformly positive definite on the null space of the Jacobian of constraints for each $k$ to guarantee the descent property of the search directions, LsFSARC only requires semipositive definiteness of Lagrangian Hessian to ensure the global convergence.

However, the convergence analysis is not complete since local convergence properties are not discussed. The proposed algorithm LsFSARC can also suffer Maratos effect. To avoid this and achieve fast convergence rate, we can introduce second-order corrections or other techniques in the algorithm. Moreover, we are working on the worst-case complexity bound for the number of iterations to find an $\epsilon$-approximate KKT point.
\section*{Acknowledgments}
The authors are very grateful to the editor and the referees, whose valuable suggestions and insightful comments helped to improve the paper's prestentation significantly.  The authors gratefully acknowledge the supports of the National Natural Science Foundation of China (12071133).


\end{document}